\documentclass[a4paper,10pt]{article}
\usepackage[a4paper,left=3.6cm,right=3.6cm]{geometry}

\usepackage[utf8]{inputenc}
\usepackage[T1]{fontenc}
\usepackage{lmodern}
\usepackage[english]{babel}
\usepackage{graphicx}
\usepackage{amsmath}
\allowdisplaybreaks[4]
\usepackage{amsthm}
\usepackage{amssymb}
\usepackage{mathrsfs}
\usepackage{amstext}
\usepackage{mathtools}
\usepackage{tikz}
\newcommand{\achsen}{
	\draw[-{Stealth}] (0,0) -- (1.15,0);
	\draw[-{Stealth}] (0,0) -- (0,1.15);
	\draw (1,-0.015) -- (1,0.015);
	\draw (-0.015,1) -- (0.015,1);
	\node[below left] at (0,0) {$0$};
	\node[below] at (1,-0.015) {$1$};
	\node[left] at (-0.015,1) {$1$};
}

\usepackage{dsfont}
\usepackage{adjustbox}
\usetikzlibrary{quantikz2, angles ,quotes}
\usepackage{enumitem}
\usepackage{mleftright}
\usepackage{csquotes}
\usepackage{authblk}

\usepackage{caption}
\usepackage{subcaption}
\usepackage{bm}

\usepackage{todonotes}
\usepackage[
backend=biber,
maxnames=10,
giveninits=true,
style=numeric-comp,
sorting=nyt,
sortcites=true,
doi=true,
url=false,
isbn=false,
eprint=false,
]{biblatex}
\usepackage{xurl}  
\usepackage[colorlinks=true, 
linkcolor=black,
citecolor=black,
urlcolor=black]{hyperref}

\appto\bibfont{\normalfont}
\newtheorem{thm}{Theorem}[section]
\newtheorem{lem}[thm]{Lemma}

\newtheorem{prop}[thm]{Proposition}

\theoremstyle{definition}
\newtheorem{defi}[thm]{Definition}
\newtheorem{ex}[thm]{Example}

\theoremstyle{remark}

\usepackage{cleveref}

\newcommand{\BS}{{S^r_{p,q}B(\bbT^d)}}

\newcommand{\bbC}{\mathbb{C}}
\newcommand{\bbB}{\mathbb{B}}
\newcommand{\bbN}{\mathbb{N}}
\newcommand{\bbR}{\mathbb{R}}
\newcommand{\bbT}{\mathbb{T}}
\newcommand{\bbZ}{\mathbb{Z}}

\newcommand{\sB}{\mathscr{B}}
\newcommand{\sC}{\mathscr{C}}

\newcommand{\mcV}{\mathcal{V}}
\newcommand{\mcW}{\mathcal{W}}

\newcommand{\mcA}{\mathcal{A}}
\newcommand{\mcK}{\mathcal{K}}

\newcommand{\bj}{\mathbf{j}}
\newcommand{\bk}{\mathbf{k}}
\newcommand{\bl}{\bm{l}}
\newcommand{\bh}{\mathbf{h}}
\newcommand{\lowl}{{\ell_0}}
\newcommand{\highl}{{\ell_1}}

\newcommand{\supp}{\operatorname{supp}}
\newcommand{\diffd}{\mathrm{d}}

\newcommand{\Bra}[1]{\bra{#1}\!}
\newcommand{\Ket}[1]{\!\ket{#1}}

\newcommand{\errquant}{e^\text{qua}}
\newcommand{\errrand}{e^\text{ran}}
\newcommand{\errdet}{e^\text{det}}
\newcommand{\vol}{\text{vol}}

\DeclareMathOperator*{\esssup}{ess\,sup}

\title{Quantum integration in Besov spaces with mixed smoothness}
\author[a,b]{Bernd Käßemodel%
	\thanks{Corresponding author. Email: bernd.kaessemodel@math.tu-chemnitz.de}%
	\textsuperscript{,}}
\author[a]{Tino Ullrich%
	\thanks{Email: tino.ullrich@math.tu-chemnitz.de}%
	\textsuperscript{,}}
\affil[a]{Chemnitz University of Technology, Faculty of Mathematics}
\affil[b]{Leipzig University, Faculty of Mathematics and Computer Science}
\date{\today}

\begin{document}
	\maketitle
	\begin{abstract}
		We study the complexity of high-dimensional integration of periodic functions from  Besov spaces $\BS$ with mixed smoothness in the quantum model of computation. We construct new quantum algorithms based on the multivariate Faber-Schauder decomposition of such functions and analyze their convergence rates. We show a gain of  one convergence order compared to the well-known deterministic rates of convergence. We additionally obtain lower bounds showing the optimality of our results up to logarithms.
	\end{abstract}
	\section{Introduction}
	Quantum algorithms for continuous numerical problems have been studied since the early 2000s, with numerical integration and function approximation as central examples \cite{novak2001,heinrich2003sobolev,heinrich2004approx2,wozni06}. The main tool underlying many of these algorithms is quantum summation and was studied by Heinrich \cite{heinrich2002summation} and by Heinrich and Novak \cite{heinrich2001optimal} and builds on amplitude amplification and amplitude estimation.  These methods generalize Grover's search method and also use ideas from Shor's factoring algorithm \cite{grover1996,shor1997,brassard2000}. 
	
	In his 2002 work on quantum summation, Heinrich established a rigorous quantum query model within the framework of information-based complexity \cite{heinrich2002summation}. This model enables a systematic comparison of numerical quantum query algorithms with deterministic and randomized classical algorithms. Although these foundations were laid more than two decades ago, many questions concerning the quantum complexity of continuous numerical problems remain open.
	
	In this paper, we investigate the problem of approximating the integral
	\begin{align}\label{intontorus}
		I_d(f)\coloneq\int_{[0,1)^d} \!f(x) \,\text{d}x 
	\end{align}
	for periodic functions $f$ belonging to the Besov spaces of dominating mixed smoothness $\BS$ on the $d$-dimensional torus by quantum algorithms that make at most $n$ queries to a quantum oracle.
	
	Quantum integration has previously been studied by Novak for multivariate Hölder classes \cite{novak2001}, by Heinrich for isotropic Sobolev classes \cite{heinrich2003sobolev}, and by Ye and Hu for anisotropic Sobolev classes \cite{ye2008}. More recently, Li and Ye considered generalized Hölder classes with isotropic smoothness \cite{li2026} and Go\'cwin studied univariate integration for piecewise Hölder-regular functions with an unknown singularity \cite{gocwin2023}. A randomized variant of the quantum query model for continuous problems, including integration, was introduced by Woźniakowski \cite{wozni06}. Further algorithmic approaches include quantum-accelerated Monte Carlo methods \cite{montanaro2015,kaneko2021,herbert2022}, circuit-level integration schemes \cite{shimada2020,shu2024}, and a framework based on quantum singular-value transformation \cite{nghiem2025}.
	
	Within the information-based complexity framework, we continue this line of research by considering Besov spaces of dominating mixed smoothness (bounded mixed difference). Integration of functions with mixed smoothness has a long-standing tradition in the classical (deterministic) setting. For overviews of high-dimensional quasi-Monte Carlo integration, of integration in spaces of mixed smoothness, and Faber-Schauder bases, see \cite{dks2013}, \cite[Chapter 8]{dung2018} and \cite[Chapter 3]{triebel2010}, respectively. 
	
	For Besov spaces of dominating mixed smoothness, D\~ung and T.\ Ullrich established general lower bounds in both the periodic and non-periodic settings, see \cite{du2015}, the references therein and Temlyakov \cite{temlyakov1990}. For Besov and Triebel--Lizorkin spaces of dominating mixed smoothness M.\ Ullrich and T.\ Ullrich derived upper bounds for Frolov's cubature formula. These bounds are optimal in the large-smoothness regime, while optimality remains open in part of the small-smoothness regime \cite{uu2016}. However, Frolov's cubature formula does not represent a QMC rule, although the weights are equal (they do not sum up to one).  Hinrichs et al.\ used Faber-Schauder coefficients to construct quasi-Monte Carlo rules that attain the optimal asymptotic order in the deterministic setting for $1/p<r<2$, see \cite{hinrichs2016}. Käßemodel, Nagel and T.\ Ullrich used the tent transform in a related Faber-Schauder-based analysis of integration in non-periodic function spaces \cite{knu2026}. Optimal-order quasi-Monte Carlo rules based on higher-order digital nets and sequences were established in \cite{gsy2016,gsy2017,gsy2018} and almost optimal lattice rules in \cite{gsy2019}. More recently, Goda and Krieg introduced a universal integration algorithm for weighted Korobov spaces \cite{gk2026}.
	
	In the randomized setting (Monte Carlo) better error rates are possible in related settings. For example, optimal asymptotic rates are known for Monte Carlo integration in the mixed Sobolev case \cite{ul17}. General procedures for determining lower bounds in this setting can be found in \cite{bakhvalov2015}.
	
	Together, these results provide classical reference rates for assessing whether quantum algorithms can achieve faster convergence.
	
	We address this question by constructing new quantum algorithms for the integration problem based on two main ingredients. The first is the tensorized Faber–Schauder decomposition of continuous functions
	\begin{align*}
		f=\sum_{\bj \in \bbN_{-1}^d}\sum_{\bk\in \bbB_\bj} d_{\bj,\bk}(f) v_{\bj,\bk},
	\end{align*}
	with coefficients $(d_{\bj,\bk}(f))_{\bj,\bk} \subset \mathbb{C}$ and a tensorized version of scaled and shifted hat-functions $v_{\bj,\bk}$ as basis (see Fig.\ \ref{fig:faberhats}). Together with the  sequence space isomorphism between  $\BS$ and the sequence spaces $s_{p,q}^rb$ given by 
	$$
	\| f \|_{S_{p,q}^r B(\bbT^d)} \asymp \Big(\sum_{\bj \in \bbN_{-1}^d} 2^{|\bj|_1(r-1/p)q}\Big( \sum_{\bk \in \bbB_{\bj}} |d_{\bj,\bk}(f)|^p  \Big)^{q/p} \Big)^{1/q}
	$$
	valid for $1\leq p,q\leq \infty$ and $1/p < r < 1+1/p$, see  \cite{hinrichs2016}, we are able to study the integration problem level-wise.  
	The second ingredient is a multilevel scheme. Using this decomposition, we express the  
	integral as a hierarchical sum over successive resolution levels. The contribution of each level is a scaled finite sum of the corresponding Faber-Schauder coefficients. Each Faber-Schauder coefficient can be recovered from at most $3^d$ point evaluations, independently of the resolution level. In this multilevel scheme the lower levels are computed to arbitrarily high precision by simulating a classical deterministic computation, whereas the higher-level contributions are approximated by quantum summation algorithms using a level-dependent balanced query allocation that controls the total integration error.
	
	Based on this construction, we obtain upper bounds for the $n$-th minimal quantum query error of the integration problem.
	
	\begin{thm}\label{thm:besovintup}
		Let $1\le p,q \le \infty$, $\max\{1/p,2/q-1\}<r<2$. Then, for all $n \in \mathbb N$ with $n > 4$, 
		\begin{align*}
			\errquant_n(I_d,\sB(\BS))        
			\lesssim n^{-r-1} &(\log n)^{(d-1)(r+1-1/q)} \lambda_{p,q}(n).
		\end{align*}
		Here, $\lambda_{p,q}(n)$ denotes the parameter-dependent logarithmic factor:
		\begin{align*}
			\lambda_{p,q}(n)=
			\begin{cases}
				1 & \text{if } \min\{p,q\}>2, \\
				(\log\log n)^{3/2} \log\log\log n & \text{if } \min\{p,q\}=2,\\
				(\log n)^{2/\min\{p,q\}-1}& \text{if } \min\{p,q\} < 2.
			\end{cases}
		\end{align*}
	\end{thm}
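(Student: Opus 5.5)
We will prove the bound with the multilevel quantum algorithm sketched in the introduction. First, the integral is written level-wise through the Faber--Schauder decomposition. Every hat function $v_{\bj,\bk}$ with some $j_i\ge 0$ integrates to $2^{-j_i-1}$ in that coordinate, and the tensor structure keeps this, so
\[
I_d(f)=\sum_{\bj\in\bbN_{-1}^d} c_\bj \sum_{\bk\in\bbB_\bj} d_{\bj,\bk}(f),
\qquad c_\bj\asymp 2^{-|\bj|_1}.
\]
Here $|\bbB_\bj|\asymp 2^{|\bj|_1}$, and each $d_{\bj,\bk}(f)$ is a fixed linear combination of at most $3^d$ point values. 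We fix a cutoff level $m$ with $2^m m^{d-1}\asymp n$.

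On the hyperbolic cross $|\bj|_1\le \lowl$ with $2^{\lowl}\lowl^{d-1}\lesssim n$, we compute all coefficients exactly. This costs $O(n)$ classical queries, which can be simulated by the quantum algorithm. The tail $|\bj|_1>\highl$, with $\highl=\lceil(1+1/r)m\rceil$ or a similar choice, is dropped. For $\min\{p,q\}\ge1$ and $r>1/p$, the sequence space norm together with Hölder's inequality gives, for the level-$\bj$ block,
\[
\Bigl|c_\bj\sum_\bk d_{\bj,\bk}\Bigr|\le 2^{-|\bj|_1}2^{|\bj|_1(1-1/p)}\|d_{\bj,\cdot}\|_p
\le 2^{-|\bj|_1 r}\,\bigl(2^{|\bj|_1(r-1/p)}\|d_{\bj,\cdot}\|_p\bigr).
\]
Summing over $|\bj|_1>\highl$ with Hölder in $\bj$ (exponent $q'$) gives a tail of order $2^{-r\highl}\highl^{(d-1)(1-1/q)}$. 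This is negligible once $\highl\ge (1+1/r)m$ up to logarithms.

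For each intermediate level $\mu=|\bj|_1\in(\lowl,\highl]$, we approximate $S_\mu=\sum_{|\bj|_1=\mu}c_\bj\sum_\bk d_{\bj,\bk}$ by Heinrich's quantum summation. We normalize it as a mean of $N_\mu\asymp 2^\mu\mu^{d-1}$ numbers $a_i=2^{\mu}c_\bj d_{\bj,\bk}$ and spend $n_\mu$ queries on it. We use the $L_p$-mean version of the summation error. Its order is $N_\mu^{-1/p}\|a\|_{\ell_p}n_\mu^{-1}$ for $p\le2$ up to logs, and $\|a\|_{\ell_2}$-type bounds with the $n^{-1}$ rate for $p>2$, all valid for $n_\mu\le N_\mu$. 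With the sequence norm, the level-$\mu$ error is bounded by
\[
n_\mu^{-1}2^{-\mu(r+1)}\cdot 2^{\mu}\mu^{(d-1)\max(0,1/p-1/q)}\ \text{times a block norm } \delta_\mu,
\qquad \Bigl(\sum_\mu \delta_\mu^q\Bigr)^{1/q}\lesssim\|f\|,
\]
where the factor $2^{\mu}$ from normalization is balanced against $2^{-\mu(r+1/p)}\cdot 2^{\mu/p}$. The allocation is then $n_\mu\asymp n\,2^{-\varepsilon(\mu-\lowl)}$, geometrically decaying, or a polynomially decaying variant in the critical cases. We check that $\sum_\mu n_\mu\lesssim n$ and sum the errors with Hölder over $\mu$. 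The lowest active level dominates, giving $n^{-1}2^{-r\lowl}\asymp n^{-r-1}(\log n)^{(d-1)r}$. The extra $(\log n)^{(d-1)(1-1/q)}$ comes from combining the $\asymp\lowl^{d-1}$ vectors $\bj$ per level via $\ell_q$-Hölder. Success probability is boosted to $1-\delta_\mu$ by median amplification, with $\delta_\mu$ summing to below $1/4$. This costs a $\log\log$ factor per level, which gives the $\log\log$ terms in $\lambda_{p,q}$ for $\min\{p,q\}=2$.

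The main obstacle is the regime $\min\{p,q\}\le2$ and the lower condition $r>2/q-1$. Here the quantum summation rate for $\ell_p$-inputs, $p<2$, loses a factor depending on $p$. The number of intermediate levels, about $\log n$, combined with Hölder in $q$, produces $(\log n)^{2/\min\{p,q\}-1}$ instead of a constant. The first thing I would try is the allocation $n_\mu\propto n\,(\mu-\lowl+1)^{-2}2^{-(\mu-\lowl)\gamma}$ with $\gamma$ tuned to $r+1-2/q$. I would then verify that the series converges exactly when $r>2/q-1$, and track the borderline $p=2$ case through the known $\log^{3/2}$ factor in Heinrich's summation bound.
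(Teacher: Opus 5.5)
Your overall architecture matches the paper's. You evaluate all layers up to $\ell_0$ exactly, with $2^{\ell_0}\ell_0^{d-1}\asymp n$. You truncate near $(1+1/r)\ell_0$. In between you run quantum summation with geometrically decaying budgets. The gap is in the step meant to handle $s=\min\{p,q\}<2$: it rests on a false summation rate.

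For exponents below $2$, the quantum summation error on the unit ball of $L_s^{\mathcal N}$ is not $n^{-1}$ up to logarithms. By Proposition~\ref{prop:quantsum}(i) it is $n^{-1}\min\{n,N/n\}^{2/s-1}$ times $\max\{\log(n/\sqrt N),1\}^{2/s-1}$, which is a polynomial loss. The paper bounds it by $n_\ell^{-2/s}W_\ell^{2/s-1}=n_\ell^{-1}(W_\ell/n_\ell)^{2/s-1}$. On the layer $\ell=\ell_0+t$ one has $W_\ell/n_\ell\approx 2^{t(1+\delta)}(\ell/\ell_0)^{d-1}$, so the loss grows geometrically in $t$ and must be beaten by the decay $2^{-r\ell}$ of the layer norms. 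The layer then contributes $2^{-(r+1)\ell_0}\ell_0^{-(d-1)/q}\,\ell_0^{2/s-1}(\ell/\ell_0)^{(d-1)(2/s-1/q)}\,2^{-t(r+1-2/s-2\delta/s)}$. Summability in $t$ is exactly where $r>2/s-1$ comes from; together with $r>1/p$ this gives $r>\max\{1/p,2/q-1\}$. It is also the source of the constraint $\delta<s(1+r)/2-1$. The factor $(\log n)^{2/s-1}$ is Heinrich--Novak's $\max\{\log(n_\ell/\sqrt{W_\ell}),1\}^{2/s-1}$, with $\log(n_\ell/\sqrt{W_\ell})\lesssim\ell_0$. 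It does not come from the number of layers combined with H\"older in $q$. Your allocation ``tuned to $r+1-2/q$'' is read off from the statement rather than derived from a correct layer bound.

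Two further steps are also wrong as stated.

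First, the exponent $(d-1)(1-1/q)$ and the role of $\min\{p,q\}$ come from placing each layer in $L_s^{\mcW_\ell}$ with $s=\min\{p,q\}$. The paper applies H\"older to the outer sum if $s=p$ and to the inner sum if $s=q$. This gives $\|\Gamma_\ell f\|_{L_s^{\mcW_\ell}}\lesssim 2^{-r\ell}\ell^{(d-1)(1-1/q)}$ on the unit ball. Your $L_p$ bookkeeping is inconsistent. With $a_i=2^\mu c_\bj d_{\bj,\bk}$, $S_\mu$ equals $N_\mu 2^{-\mu}\asymp\mu^{d-1}$ times the mean of the $a_i$. Your layer bound omits this factor and then partly re-inserts it as an ``extra'' $(\log n)^{(d-1)(1-1/q)}$. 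Done consistently in $L_p$, the layer factor is $\mu^{(d-1)(1-1/\max\{p,q\})}$. For $p>q>2$ this gives a worse exponent than the theorem claims.

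Second, the $\log\log$ factors are not the price of median amplification. If boosting cost a $\log\log n$ factor per layer, it would appear for every $s$, contradicting $\lambda_{p,q}=1$ for $s>2$. The paper uses the error-splitting lemma with $\nu_\ell=\lceil 8(2\ln(\ell-\ell_0)+\ln 8)\rceil$ repetitions, i.e.\ failure probabilities of order $(\ell-\ell_0)^{-2}$. Their cost is absorbed into the constant $K$ by the geometric decay of $n_\ell$. The factor $(\log\log n)^{3/2}\log\log\log n$ for $s=2$ comes from $\lambda(n,N)^{3/2}\log\lambda(n,N)$ in Proposition~\ref{prop:quantsum}(ii), since $\log\log(n_\ell+1)\asymp\log\log n$.

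A minor point: $\ell_1=\lceil(1+1/r)m\rceil$ leaves a tail larger than the target by $m^{d-1}$. You need $O(\log m)$ further layers, which the paper's implicit definition of $\ell_1$ provides.
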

	
	Let us emphasize that the above restriction $r<2$ comes from the Faber-Schauder characterization which can be partly extended to $r=2$, see \cite[Thm.\ 3.3]{knu2026}. In fact, using 
	$$
	\sup\limits_{\bj \in \bbN_{-1}^d} 2^{ 3|\bj|_1/2} \left(\sum_{\bk \in \bbB_{\bj}} |d_{\bj, \bk}(f)|^2\right)^{1/2} 
	\lesssim \|f\|_{H^2_{\text{mix}}(\bbT^d)}
	$$
	for any $f\in H^2_{\text{mix}}(\bbT^d)$, the analogous algorithm as above yields a result for mixed second order Sobolev functions.
	\begin{thm}\label{thm:besovintup_2} It holds for $n>4$
		$$
		\errquant_n(I_d,\sB(H^2_{\text{mix}}(\bbT^d)))      
		\lesssim n^{-3} (\log n)^{3(d-1)}(\log\log n)^{3/2} \log\log\log n .
		$$
	\end{thm}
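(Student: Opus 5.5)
The plan is to rerun the multilevel quantum algorithm behind Theorem~\ref{thm:besovintup}, with the Besov sequence-space bound replaced by the stated $H^2_{\text{mix}}$ estimate. That estimate is exactly the $s^r_{p,q}b$ quasi-norm with $r=2$, $p=2$, $q=\infty$. Indeed, $2^{|\bj|_1(r-1/p)}=2^{3|\bj|_1/2}$, and the unit ball of $H^2_{\text{mix}}(\bbT^d)$ embeds into a multiple of the sequence ball $\{(d_{\bj,\bk}) : \sup_\bj 2^{3|\bj|_1/2}\|(d_{\bj,\bk})_\bk\|_{\ell_2}\le C\}$.

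Plugging these parameters into the rate of Theorem~\ref{thm:besovintup} gives $n^{-3}(\log n)^{(d-1)(3-0)}$ times the factor $\lambda$ for $\min\{p,q\}=2$. This is precisely the claim, so the task is to check that the proof of Theorem~\ref{thm:besovintup} only used the upper (Jackson-type) sequence-space bound and the restriction $r<2$ only through the characterization.

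The steps are as follows.

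\begin{enumerate}
\item Write $I_d(f)=\sum_{\bj}\sum_{\bk\in\bbB_\bj} d_{\bj,\bk}(f)\,I_d(v_{\bj,\bk})$. For $\bj\in\bbN_0^d$-type levels one has $I_d(v_{\bj,\bk})\asymp 2^{-|\bj|_1}$, so each level contribution is $2^{-|\bj|_1}$ times a sum of $|\bbB_\bj|\asymp 2^{|\bj|_1}$ coefficients. Each coefficient is computable from at most $3^d$ function values.

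\item Split the levels into three groups with thresholds $\lowl<\highl$.
\begin{itemize}
\item For $|\bj|_1\le \lowl$, compute the levels classically to high precision, using about $2^{\lowl}\lowl^{d-1}\asymp n$ queries.
\item For $\lowl<|\bj|_1\le \highl$, approximate each level sum by quantum summation of the normalized mean of the $d_{\bj,\bk}$ over $\bk$, with $n_\bj$ queries.
\item Levels with $|\bj|_1>\highl$ are dropped.
\end{itemize}

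\item Bound the truncation error by Cauchy--Schwarz:
\[
2^{-|\bj|_1}\sum_\bk|d_{\bj,\bk}|\le 2^{-|\bj|_1/2}\|d_{\bj,\cdot}\|_{\ell_2}\lesssim 2^{-2|\bj|_1}.
\]
Summing over $|\bj|_1>\highl$ gives $2^{-2\highl}\highl^{d-1}$, so $\highl\approx \tfrac32\log n$ suffices.

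\item Control the quantum levels with Heinrich's result for quantum summation of $\ell_2$-bounded sequences. For the mean of $N$ numbers with normalized $\ell_2$ norm bounded by $\sigma$, the error is $\lesssim \sigma/n_\bj$ up to the $\ell_2$-type logarithmic losses that produce $(\log\log n)^{3/2}\log\log\log n$. Here $\sigma_\bj=2^{-|\bj|_1/2}\|d_{\bj,\cdot}\|_{\ell_2}\cdot 2^{|\bj|_1}\cdot 2^{-|\bj|_1}$, i.e.\ the level error is $\lesssim 2^{-2|\bj|_1}\cdot 2^{|\bj|_1}/n_\bj$, for $n_\bj\lesssim 2^{|\bj|_1}$.

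\item Choose the balanced allocation $n_\bj\asymp n\,2^{-\alpha(|\bj|_1-\lowl)}/\lowl^{d-1}$ for small $\alpha>0$, exactly as in the proof of Theorem~\ref{thm:besovintup} for $q=\infty$.
\end{enumerate}

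Summing over the roughly $\lowl^{d-1}$ level vectors per layer then gives total error $n^{-3}(\log n)^{3(d-1)}$ times the $\ell_2$ log factor. The success probability is boosted by the median trick, which only costs a logarithmic factor already absorbed in $\lambda$.

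The main obstacle is the bookkeeping in the last step. One must ensure the $q=\infty$ case, where there is no $\ell_q$ summability across levels, produces exactly the exponent $3(d-1)$ and not more, and that the failure probabilities over the $\approx(\log n)^{d}$ quantum levels are controlled without extra logs beyond $\lambda$. I would handle this by copying the corresponding estimate of the proof of Theorem~\ref{thm:besovintup} at $r=2,p=2,q=\infty$. That estimate uses the sequence-space norm only through its upper bound, so the $H^2_{\text{mix}}$ embedding above suffices.
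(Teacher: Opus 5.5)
Your top-level reduction is exactly the paper's argument. The $H^2_{\text{mix}}$ bound is the $s^2_{2,\infty}b$ estimate, and Proposition~\ref{prop:SBinsb} is the only place where $r<2$ enters. Also $\delta<\min\{r,s(1+r)/2-1\}=2$ is still admissible, so rerunning the proof of Theorem~\ref{thm:besovintup} at $(r,p,q)=(2,2,\infty)$ gives the claim.

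\textbf{The gap.} The algorithm you actually spell out in steps 2, 4 and 5 is not the one from that proof. For $d\ge 2$ it does not reach the exponent $3(d-1)$.

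You run one quantum summation per $\bj$ (the mean over $\bk\in\bbB_\bj$) with budget $n_\bj\asymp n2^{-\alpha(|\bj|_1-\lowl)}/\lowl^{d-1}$. The correct per-$\bj$ error is $\|d_{\bj,\cdot}\|_{L_2^{\bbB_\bj}}/n_\bj\lesssim 2^{-2|\bj|_1}/n_\bj$, up to the $\log\log$ factors. Your displayed $2^{-2|\bj|_1}2^{|\bj|_1}/n_\bj$ carries a spurious $2^{|\bj|_1}$ and would only give $n^{-2}$ up to logarithms.

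Summed over the $\asymp\lowl^{d-1}$ vectors of the first quantum layer, the correct per-$\bj$ bound gives $\lowl^{d-1}\cdot 2^{-2\lowl}\lowl^{d-1}/n\asymp 2^{-3\lowl}\lowl^{d-1}\asymp n^{-3}(\log n)^{4(d-1)}$. So your assertion that this sum yields $n^{-3}(\log n)^{3(d-1)}$ is false.

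A better allocation cannot repair this. Each subproblem costs $\gtrsim\sigma/n_\bj$ by Proposition~\ref{prop:quantsum}\ref{prop:quantsumii}, and Lemma~\ref{lem:errorofsum<sumoferrors} adds errors linearly. By Cauchy--Schwarz, $\sum_{i=1}^M\sigma/n_i\ge M^2\sigma/n$ whenever $\sum_i n_i\le n$.

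The boosting step also fails. Making $\asymp(\log n)^d$ subproblems simultaneously reliable requires $\nu\gtrsim\log\log n$ repetitions each. That is an extra factor $\log\log n$, not something absorbed in $(\log\log n)^{3/2}\log\log\log n$.

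\textbf{The missing idea: pool each hyperbolic layer into a single summation problem.} The paper sums over $\mcW_\ell=\{(\bj,\bk):\bj\in\mcV_\ell,\bk\in\bbB_\bj\}$, with $W_\ell\asymp 2^\ell\ell^{d-1}$, the function $\Gamma_\ell(f)(\bj,\bk)\approx W_\ell2^{-\ell}d_{\bj,\bk}(f)$.

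\begin{itemize}
\item The $\ell_2$-masses of the different $\bj$ then add in quadrature: $\|\Gamma_\ell f\|_{L_2^{\mcW_\ell}}\lesssim W_\ell2^{-\ell}\bigl(W_\ell^{-1}|\mcV_\ell|2^{-3\ell}\bigr)^{1/2}\asymp\ell^{d-1}2^{-2\ell}$.
\item The layer receives the undivided budget $n_\ell=2^{\lowl}\lowl^{d-1}2^{-(\ell-\lowl)\delta}\asymp n2^{-(\ell-\lowl)\delta}$.
\item At $\ell\approx\lowl$ the error is therefore $2^{-3\lowl}\asymp n^{-3}(\log n)^{3(d-1)}$, times the $\log\log$ factor.
\item Only $\highl-\lowl\lesssim\log n$ subproblems remain. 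The repetitions $\nu_\ell=\lceil 8(2\ln(\ell-\lowl)+\ln 8)\rceil$ are paid for by the geometric decay of $n_\ell$, at the cost of the constant $K$ rather than a logarithm.
\end{itemize}

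Your closing remark about copying the estimate of Theorem~\ref{thm:besovintup} is correct only if you copy this per-layer construction, not the per-$\bj$ scheme you describe.
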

	To assess the optimality of these decay rates, we complement the upper estimates with a lower bound. Using an atomic decomposition of the mixed Besov spaces together with lower bounds for quantum summation, we establish the following lower bound for the $n$-th minimal quantum query error.
	
	\begin{thm}\label{thm:besovintlow}
		Let $1 \le p,q \le \infty$ and $r > 1/p$. Then, for $n\in \bbN, n\ge 2$, we know that
		\begin{align*}
			\errquant_n(I_d,\sB(\BS))
			\gtrsim n^{-r-1}(\log n)^{(d-1)(1/p-1/q)_+}.
		\end{align*}
	\end{thm}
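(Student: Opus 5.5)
We reduce the lower bound to known lower bounds for quantum summation of bounded sequences, by embedding a suitably scaled family of bump functions into the unit ball $\sB(\BS)$. The tool is Heinrich's (and Novak's) lower bound for quantum mean computation: for $N\in\bbN$ and $F_N=\{x\in\bbR^N:\|x\|_\infty\le 1\}$, the mean $S_N(x)=\frac1N\sum_{i=1}^N x_i$ satisfies $\errquant_n(S_N,F_N)\gtrsim n^{-1}$ whenever $n\le cN$ for a suitable constant $c>0$. We combine this with an atomic decomposition of $\BS$, in which smooth compactly supported atoms $a_{\bj,\bk}$ at a dyadic level $\bj$ (volume $2^{-|\bj|_1}$) have controlled norms.

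\emph{Construction.} Fix a smooth bump $\psi\ge 0$ supported in $(0,1)$ with $\int\psi>0$. For $\bj\in\bbN_0^d$ put
\begin{align*}
\psi_{\bj,\bk}(x)=\prod_{i=1}^d\psi(2^{j_i}x_i-k_i),\qquad \bk\in\bbB_\bj,
\end{align*}
which gives $N_\bj=2^{|\bj|_1}$ functions with disjoint supports. Choose a set $J_m$ of levels with $|\bj|_1=m$; we take either a single $\bj$, or all of them, about $m^{d-1}$. For a sign-type vector $x=(x_{\bj,\bk})$ with $|x_{\bj,\bk}|\le1$ set
\begin{align*}
f_x=\lambda\sum_{\bj\in J_m}\sum_{\bk\in\bbB_\bj}x_{\bj,\bk}\psi_{\bj,\bk}.
\end{align*}
The atomic decomposition gives
\begin{align*}
\|f_x\|_{\BS}\lesssim\lambda\, 2^{m(r-1/p)}\,2^{m/p}\,|J_m|^{1/q}=\lambda\,2^{mr}|J_m|^{1/q}.
\end{align*}
This uses disjoint supports within a level, so that the $\ell_p$ norm over $\bk$ is $2^{m/p}$. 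Hence $\lambda=2^{-mr}|J_m|^{-1/q}$ keeps $f_x$ in the unit ball.

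\emph{Integral as a mean.} We have
\begin{align*}
I_d(f_x)=\lambda (\smallint\psi)^d\,2^{-m}\sum_{\bj,\bk}x_{\bj,\bk}=\lambda(\smallint\psi)^d|J_m|\cdot S_N(x),\qquad N=|J_m|2^m.
\end{align*}
Moreover, a quantum query to $f_x$ at $t$ can be simulated by one query to $x$: at each point $t$ only one $\bk$ per level is nonzero. With several levels, however, one query returns $|J_m|$ entries, so I would instead bundle the entries. Form the vector $y_{\bk'}$ indexed by the common product cell, which gives a bounded sequence on $N'$ entries after averaging, or else simply use summation of vectors with blocks. The cleaner route is the following. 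If $p\le q$, take a single level, $|J_m|=1$; the factor $(\log n)^{(d-1)(1/p-1/q)_+}$ is then $1$, and we choose $2^m\asymp n$ to get error $\gtrsim\lambda n^{-1}\asymp n^{-r-1}$. If $p>q$, take all levels and view $x$ as indexed by $(\bj,\bk)$; one query yields the vector $(x_{\bj,\bk(t)})_{\bj}$, which is still bounded by a single query to a vector-valued oracle. The summation lower bound extends to this situation because one can reduce to scalar summation over $N=|J_m|2^m$ entries with $|J_m|$ queries per query.

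\emph{Main obstacle and how I would address it.} The main obstacle is exactly this multi-level query issue, together with obtaining the log factor with the right exponent $(1/p-1/q)$ rather than $-1/q$. First I would try to avoid superposition across levels by choosing atoms that are not tensorized hats but have disjoint supports across different $\bj$ as well. To do this, partition $[0,1)^d$ into $\asymp m^{d-1}$ regions, one per $\bj$, and place the level-$\bj$ bumps only in its region, about $2^m/m^{d-1}$ bumps each. Then $N\asymp 2^m$ entries are read one per query, and the norm becomes $\lambda 2^{m(r-1/p)}(2^m/m^{d-1})^{1/p}m^{(d-1)/q}$. This gives $\lambda=2^{-mr}m^{(d-1)(1/p-1/q)}$, and with $I_d(f_x)=\lambda 2^{-m}\cdot\#\cdot S_N$ the mean scaling is $\lambda$ up to constants. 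Choosing $2^m\asymp n$, so that $m\asymp\log n$, and applying the summation lower bound $\gtrsim n^{-1}$ then yields
\begin{align*}
\errquant_n\gtrsim n^{-r-1}(\log n)^{(d-1)(1/p-1/q)},
\end{align*}
which together with the case $p\le q$ gives the claim. The remaining checks are routine: the norm estimate for such localized atoms, which follows from the atomic decomposition theorem, and the validity of the summation bound for the regime $n\asymp N$ by adjusting constants.
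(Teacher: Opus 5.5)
\textbf{Main gap: the cases are inverted.} Your two constructions are the paper's. One is a single $\bj$ with $|\bj|_1=m$ and all $\bk\in\bbB_\bj$. The other is the localized one, where each admissible $\bj$ gets its own cell and only rectangles inside that cell carry bumps. The reduction to $L_\infty$-summation via Lemma~\ref{lem:quantemb} with $\kappa=1$ is also the same. But you have attached the constructions to the wrong cases. The exponent $(d-1)(1/p-1/q)_+$ is positive exactly when $p<q$ (then $1/p>1/q$) and vanishes when $p\ge q$. You assert that for $p\le q$ the log factor is $1$ and use the single level there. That gives only $n^{-r-1}$ and loses $(\log n)^{(d-1)(1/p-1/q)}$ whenever $p<q$. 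You then use the localized construction for $p>q$, where its bound $n^{-r-1}(\log n)^{(d-1)(1/p-1/q)}$ has a \emph{negative} log exponent, so it is weaker than the claimed $n^{-r-1}$. As assembled, the argument proves the statement only for $p=q$ or $d=1$. The repair is to swap the cases: single level for $q\le p$ or $d=1$, localized for $q>p$ and $d>1$. This is exactly the paper's split; alternatively, note that both bounds hold for all $p,q$ and take the larger.

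\textbf{Two steps you call routine also need an actual idea.} First, ``one region per $\bj$'' cannot range over all $\bj$ with $|\bj|_1=m$. A level-$\bj$ dyadic rectangle fits into a cube of side $2^{-u}$ only if $j_i\ge u$ for every $i$; for $d=2$, shapes $(0,m)$ and $(m,0)$ are full-width and full-height strips. The paper handles this as follows:
\begin{enumerate}
\item It takes $u=\lceil\frac{d-1}{d}\log m\rceil$, so the $2^{du}\ge m^{d-1}$ cubes admit an injective assignment $\bj\mapsto h(\bj)$.
\item It restricts to $\bj$ with all $j_i\ge u$ and checks that this set still has $\asymp m^{d-1}$ elements.
\item Each cube then carries exactly $2^{m-du}\le 2^m m^{-(d-1)}$ rectangles, and the total count $|\mcW_m|$ is $\asymp 2^m$.
\end{enumerate}
This count is what makes both the norm estimate and the condition $2n\le c|\mcW_m|$ work.

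Second, Lemma~\ref{lem:quantemb} requires $\Gamma(f)(s)$ to depend on $f$ only through $\beta\circ f$, with $\beta$ mapping into the finite set $\bbB_{m^*}$. So the linear map $x\mapsto f_x$ on real-valued inputs is not admissible as it stands. The paper replaces the coefficients by $\gamma(\beta(x_{\bj,\bk}))$, pays an additive $2^{-m^*/2}$, and lets $m^*\to\infty$.
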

	
	The assumption $r> 1/p$ in Theorem \ref{thm:besovintlow} ensures the continuous embedding 
	\begin{align*}
		\BS \hookrightarrow C(\bbT^d)
	\end{align*}
	so that point evaluations are well-defined. However, the proof itself does not rely on this restriction. For $0<r\le 1/p$, the error estimate in Theorem \ref{thm:besovintlow} remains valid, but should be understood as a bound on the continuous subclass $\sB(\BS)\cap C(\bbT^d)$.  
	
	Consequently, within the parameter range of Theorem \ref{thm:besovintup} considered for the upper bounds, the polynomial rate cannot be improved and the upper and lower estimates differ at most by logarithmic factors.
	
	Next, we compare the quantum bounds with the known error rates in the classical models of computation. 
	In the deterministic setting, the minimal worst-case error using at most $n$ function evaluations, for the same periodic Besov spaces and parameter range satisfies
	\begin{align*}
		\errdet_n(I_d,\sB(\BS)) \asymp n^{-r}(\log n)^{(d-1)(1-1/q)},
	\end{align*}
	see \cite{hinrichs2016}. 
	In the randomized setting, sharp error bounds are known for the mixed Sobolev case on the unit cube, corresponding to $p=q=2$ in the Besov scale. More precisely, for $H_\text{mix}^r = S_{2,2}^rB$ and $r\ge0$
	\begin{align*}
		\errrand_n(I_d,\sB(H_\text{mix}^r([0,1]^d))) \asymp n^{-r-1/2},
	\end{align*}
	see \cite{ul17}, where the space $H_\text{mix}^r$ is denoted by $H_2^r$ and $\errrand_n$ denotes the minimal worst-case root mean square error using at most $n$ function evaluations in expectation for every input. These classical estimates show that, in terms of the polynomial decay, the quantum rate gains one power of $n$ over the deterministic rate and half a power of $n$ over the randomized rate in the mixed Sobolev setting.
	
	\par\medskip
	\textbf{Notation.} As usual $\bbN, \bbZ, \bbR$ and $\bbC$ denote the sets of natural numbers, integers, real numbers and complex numbers. Furthermore, let $\bbN_0=\bbN \cup\{0\}$, $\bbN_{-1}=\bbN_0 \cup\{-1\}$ and $\bbB_j=\bbZ\cap [0,2^j)$ for $j \in \bbN_{-1}$. For a real number $a$ we set $(a)_+\coloneq \max\{a,0\}$, $\lfloor a \rfloor = \max\{ n \in \bbZ:n\le a \}$ and $\lceil a\rceil=-\lfloor-a \rfloor$. The letter $d$ is reserved for the underlying dimension in expressions such as $\bbN^d, \bbR^d$ etc. Let $\chi_A$ denote the indicator function of a set $A$. Multi-indices $\bj=(j_1,\dots,j_d)\in \bbN_{-1}^d$ are denoted by boldface letters and for such $\bj$ we set $|\bj|_1=\sum_{i=1}^{d} |j_i|$. For a normed space $X$, we denote its closed unit ball by $\sB(X)$. For sequences $(x_n),(y_n)$, we write $x_n \lesssim y_n$ if there exists a constant $c>0$ (which may depend on $d,r,p$ and $q$) such that $x_n \le cy_n$ for all $n \in \bbN$. We will write $x_n \asymp y_n$ if $x_n \lesssim y_n$ and $y_n \lesssim x_n$. Throughout, $\log$ denotes the logarithm to base 2.

	\section{The quantum model of computation}
	
	In this section we briefly recall Heinrich's quantum query model for numerical problems \cite{heinrich2002summation}. Within this framework Heinrich and Novak investigated the complexity of summation and integration problems \cite{heinrich2002summation,heinrich2001optimal,heinrich2003sobolev}. The most important results regarding quantum summation are summarized at the end of this section.
	
	For $m \in \bbN$ let $H_m\coloneq \bbC^2 \otimes \dots \otimes\bbC^2$ be the Hilbert space of an $m$-qubit system and let $\sC_m=\{\ket i: i \in \bbB_m\}$ denote its computational basis, where the integers $i$ are identified with their binary representations.

	Within the framework of information-based complexity, a numerical problem is specified by nonempty sets $D$ and $R$, a class $F\subset R^D$ of functions from $D$ to $R$, a normed space $G$ and a mapping $S: F \to G$. To describe quantum algorithms for approximating $S(f)$, $f \in F$ in the norm of $G$, we have to specify how they obtain information about $f$. This is done through the notion of a quantum query: A \emph{(quantum) query $Q$ on $F$} is a tuple 
	\begin{align*}
		Q=(m,m',m'',Z,\tau, \beta)
	\end{align*}
	consisting of integers $m,m',m'' \in \bbN$ such that $m'+m''\le m$, a nonempty subset $Z\subset \bbB_{m'}$ and two mappings 
	\begin{align*}
		\tau &:Z \to D \quad\text{and}\\
		\beta &: R \to \bbB_{m''}.
	\end{align*}
	The mapping $\tau$ determines the sampling points accessible to the algorithm, while $\beta$ provides the encoding of the corresponding function values. The integer $m(Q)\coloneq m$ determines the number of qubits used by $Q$. For a quantum query $Q$, a function $f \in F$, and $\,\Ket i \Ket x \Ket y \in \sC_{m'}\otimes \sC_{m''} \otimes \sC_{m-m'-m''}$ (if $m=m'+m''$, the last register is omitted), we set
	\begin{align*}
		Q_f \Ket i \Ket x \Ket y = 
		\begin{cases}
			\Ket i \Ket {x\oplus (\beta \circ f \circ \tau ) (i)}\Ket y &\textrm{if } i\in Z,\\
			\Ket i\Ket x\Ket y & \textrm{otherwise},
		\end{cases}
	\end{align*}
	where $\oplus$ means addition modulo $2^{m''}$. In this way a unitary operator $Q_f$ on $H_m$ is assigned to $Q$.
	
	A \emph{quantum algorithm from $F$ to $G$} is a tuple $A=(Q,(U_j)_{j=0}^{n},b,\varphi)$ consisting of a quantum query $Q$ on $F$, $n+1 \in \bbN$ unitary operators $U_0,\dots,U_n$ on $H_m$, an integer $b \in \bbB_m$ representing the initial basis state and the output mapping $\varphi: \bbB_m \to G$ , with $m=m(Q)$. The integer $n(A)\coloneq n$ denotes the number of queries of $A$. Throughout this paper, complexity is measured by this number, the applications of the query operator $Q_f$. The number of qubits, the unitary transformations and the postprocessing are not included in the query cost. For $n \in \bbN_0$ let $\mathcal A_n$ denote the set of all quantum algorithms $A$ with $n(A)\le n$.
	
	For each $f \in F$ a quantum algorithm $A=(Q,(U_j)_{j=0}^n, b, \varphi)$ from $F$ to $G$ induces a probability measure $A(f)$ on $G$ in the following way: Set $A_f\coloneq U_nQ_fU_{n-1}\dots U_1Q_fU_0$ and for $C\subset G$
	\begin{align*}
		A(f)(C) \coloneq \sum_{i \in \varphi^{-1}(C)} |\!\Bra i A_f \Ket {b} \!|^2,
	\end{align*}
	where $\Bra{i} A_f \Ket b$, $i \in \bbB_m$ denotes the coefficients of $A_f\Ket{b}$ in the basis representation, i.e.
	\begin{align*}
		A_f\Ket{b} = \sum_{\Ket i \in \sC_m} \Bra {i} A_f \Ket{b} \Ket i \!.
	\end{align*}
	
	The following quantity plays the central role in our investigation. It describes the minimal worst-case error decay of a given numerical problem in the quantum setting. 
	\begin{defi}
		Let $n \in \bbN_0, \theta \in [0,1)$. The \emph{$n$-th minimal query error} is defined as
		\begin{align*}
			\errquant_n(S,F,\theta)\coloneq \inf_{A \in \mcA_n} \sup_{f \in F} \inf \{\varepsilon>0: A(f)(\sB_\varepsilon(S(f))) \ge 1-\theta\},
		\end{align*}
		where $\sB_\varepsilon(S(f))$ denotes the closed ball in $G$ with radius $\varepsilon$ and center $S(f)$. 
	\end{defi}
	
	The above definition of an algorithm is a special case of quantum algorithms with several measurements. Since the present paper only deals with the investigation of asymptotic error rates, we can limit ourselves to algorithms with one measurement without restriction \cite[Lem. 1]{heinrich2002summation}. Moreover, the specific choice of the constant $\theta$ is essentially arbitrary. In our case ($G=\bbR$) any $\theta \in (0,1/2)$ could be used. This was shown in \cite[Lem. 3]{heinrich2002summation}. In view of this we write
	\begin{align*}
		\errquant_n(S,F) \coloneq \errquant_n(S,F,1/4).
	\end{align*}
	
	Finally, we mention two general results on quantum errors and one specific result on the summation problem from Heinrich. The first one gives a way for comparing the quantum query errors of two related numerical problems.  
	
	\begin{lem}[{\cite[Cor. 1]{heinrich2003sobolev}}]\label{lem:quantemb}
		Let $D,\tilde D,R,\tilde R$ be nonempty sets and let $F \subset R^D, \tilde F \subset \tilde R^{\tilde D}$ be nonempty and $G$ a normed space. Let further $\tilde S: \tilde F \to G$, $S: F \to G$ and $\Gamma:F \to \tilde F$ be mappings with $S=\tilde S\circ \Gamma$, for which $\Gamma$ is of the following form: There are $\kappa, m^* \in \mathbb N$ and mappings
		\begin{align*}
			\eta_j &:\tilde D \to D \quad (j=0,\dots,\kappa-1)\\
			\beta &: R \to \bbB_{m^*} \\
			\varrho &: \tilde D \times {\bbB_{m^*}}^\kappa \to \tilde R
		\end{align*}
		such that for $f \in F$ and $s \in \tilde D$
		\begin{align}\label{querrymapping}
			\Gamma(f)(s)=\varrho(s,(\beta( f ( \eta_j(s))))_{j=0}^{\kappa-1}).
		\end{align}
		Then for any $n \in \bbN_0$ it holds
		\begin{align*}
			\errquant_{2\kappa n}(S,F)\le \errquant_n(\tilde S, \tilde F).
		\end{align*}
	\end{lem}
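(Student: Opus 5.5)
The plan is a simulation argument: any quantum algorithm for $\tilde S$ on $\tilde F$ will be turned into one for $S$ on $F$ by replacing each query to $\tilde f=\Gamma(f)$ with a fixed circuit that uses $2\kappa$ queries to $f$. Let $\tilde A=(\tilde Q,(\tilde U_j)_{j=0}^n,\tilde b,\tilde\varphi)\in\mcA_n$ with $\tilde Q=(\tilde m,\tilde m',\tilde m'',\tilde Z,\tilde\tau,\tilde\beta)$. We define $A\in\mcA_{2\kappa n}$ whose output measure $A(f)$ equals $\tilde A(\Gamma(f))$ for every $f\in F$. Taking the supremum over $f$ and the infimum over $\tilde A$ then gives $\errquant_{2\kappa n}(S,F)\le\errquant_n(\tilde S,\tilde F)$, because $S(f)=\tilde S(\Gamma(f))$ and the success probabilities for balls around $S(f)$ coincide.

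The key step is to implement $\tilde Q_{\Gamma(f)}$ as $V_f=W^{-1}\,C\,W$ on an enlarged register, where $W$ is a product of $\kappa$ queries to $f$ and $C$ is a query-free unitary.
\begin{enumerate}[label=(\roman*)]
	\item Use registers $\ket i\ket x\ket y$ for $\tilde Q$, plus $\kappa$ registers of $m^*$ qubits for the values $\beta(f(\eta_j(s)))$ and auxiliary registers holding copies of the address.
	\item Define the query $Q=(m,m',m^*,Z,\tau,\beta)$ with $Z=\{(j,i): j<\kappa,\ i\in\tilde Z\}$, suitably encoded in binary, and $\tau(j,i)=\eta_j(\tilde\tau(i))$. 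Its value register is a single $m^*$-qubit register.
	\item Since one query always writes into the same value register, bracket each query by query-free unitaries. These write the index $j$ and a copy of $i$ into the address register, and swap the $j$-th value register into the query's value position.
	\item After the $\kappa$ queries, the value registers hold $\ket{u_j\oplus\beta(f(\eta_j(\tilde\tau(i))))}$. Starting from $u_j=0$ gives exactly the values needed in \eqref{querrymapping}.
	\item Apply the query-free unitary $C$: $\ket i\ket x\ket{u}\mapsto\ket i\ket{x\oplus\tilde\beta(\varrho(\tilde\tau(i),u))}\ket u$ for $i\in\tilde Z$, and the identity otherwise. It is unitary because $\oplus$ is invertible and $i,u$ are untouched.
	\item Apply $W^{-1}$ to uncompute the value registers. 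This costs another $\kappa$ queries, because $Q_f$ is an involution up to modular inverse. Iterating $Q_f$ $2^{m^*}-1$ times would also invert it, so instead conjugate the query register by the query-free map $v\mapsto -v \bmod 2^{m^*}$, turning $Q_f$ into $Q_f^{-1}$.
\end{enumerate}
Then $V_f$ acts as $\tilde Q_{\Gamma(f)}\otimes \mathrm{Id}$ on basis states whose auxiliary registers are zero, using $2\kappa$ queries.

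Set $U_j=\tilde U_j\otimes\mathrm{Id}$, interleaved with the query-free parts of $V_f$, and take the initial state $\ket{\tilde b}\ket 0$. Let the output map be $\varphi(i,z)=\tilde\varphi(i)$. By linearity, $A_f\ket b=(\tilde A_{\Gamma(f)}\ket{\tilde b})\otimes\ket0$, so the measurement distributions agree. The number of queries is $2\kappa n$.

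The main obstacle is bookkeeping with respect to the formal model: a single fixed query tuple must serve all $\kappa$ maps $\eta_j$, and $Q_f$ must be inverted without paying extra queries. The first is handled by the index $j$ in $Z$ and $\tau$, the second by the negation conjugation above. It must also be checked that the auxiliary registers return exactly to $\ket 0$, so that no entanglement garbage alters the amplitudes.
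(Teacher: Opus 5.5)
Your compute--apply--uncompute simulation is correct: $\kappa$ queries load the values $\beta(f(\eta_j(\tilde\tau(i))))$ through a single indexed query, a query-free unitary adds $\tilde\beta\circ\varrho$, and $\kappa$ more queries uncompute, with $Q_f^{-1}$ obtained by conjugating with $v\mapsto -v \bmod 2^{m^*}$. This gives $A(f)=\tilde A(\Gamma(f))$ with $2\kappa\, n(\tilde A)$ queries, which is the standard argument behind the cited result; the paper gives no proof of its own and simply quotes \cite[Cor.~1]{heinrich2003sobolev}.
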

	
	\begin{lem}[{\cite[Cor. 2]{heinrich2003sobolev}}]\label{lem:errorofsum<sumoferrors}
		Let $k \in \bbN_0$, $S_\ell:F \to \bbR$ $(\ell=0,\dots,k)$ and $\nu_0,\dots, \nu_k \in \bbN$ such that
		\begin{align*}
			\sum_{\ell=0}^k e^{-\nu_\ell/8}\le \frac{1}{4}.
		\end{align*}
		Then for $n_0,\dots,n_k \in \bbN_0$ and $n\coloneq\sum_{\ell=0}^k \nu_\ell n_\ell$ we have
		\begin{align*}
			\errquant_n\mleft(\sum_{\ell=0}^k S_\ell,F\mright) \le \sum_{\ell=0}^k \errquant_{n_\ell}(S_\ell,F).
		\end{align*}
	\end{lem}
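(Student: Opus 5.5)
The plan is the standard median-amplification argument: boost each summand's success probability by repetition, then run all boosted algorithms one after another and add their outputs. Fix $\delta>0$. For every $\ell$ choose an algorithm $A_\ell\in\mcA_{n_\ell}$ such that for all $f\in F$
\begin{align*}
A_\ell(f)\bigl(\sB_{\varepsilon_\ell}(S_\ell(f))\bigr)\ge 3/4,\qquad \varepsilon_\ell\coloneq\errquant_{n_\ell}(S_\ell,F)+\delta .
\end{align*}
Such an algorithm exists by the definition of the $n$-th minimal query error with $\theta=1/4$.

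\textbf{Step 1 (median amplification).} Run $A_\ell$ independently $\nu_\ell$ times and output the median $M_\ell$ of the $\nu_\ell$ real outputs. If $M_\ell\notin\sB_{\varepsilon_\ell}(S_\ell(f))$, then at least $\nu_\ell/2$ of the runs failed. Each run fails with probability at most $1/4$, so Hoeffding's inequality gives
\begin{align*}
\mathbb P\bigl(|M_\ell-S_\ell(f)|>\varepsilon_\ell\bigr)\le \exp\bigl(-2\nu_\ell(1/2-1/4)^2\bigr)=e^{-\nu_\ell/8}.
\end{align*}
This stage uses $\nu_\ell n_\ell$ queries.

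\textbf{Step 2 (combination and union bound).} Perform all these runs for $\ell=0,\dots,k$ and output $\sum_\ell M_\ell$. The total number of queries is $n=\sum_\ell \nu_\ell n_\ell$. By the union bound, with probability at least $1-\sum_\ell e^{-\nu_\ell/8}\ge 3/4$ every $M_\ell$ lies within $\varepsilon_\ell$ of $S_\ell(f)$. On this event
\begin{align*}
\Bigl|\sum_\ell M_\ell-\sum_\ell S_\ell(f)\Bigr|\le\sum_\ell \errquant_{n_\ell}(S_\ell,F)+(k+1)\delta .
\end{align*}
Letting $\delta\to0$ gives the claim.

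\textbf{Step 3 (formalization in Heinrich's model; main obstacle).} The only delicate point is that the procedure above is a quantum algorithm in the sense of the definition, i.e.\ one that uses one fixed query $Q$ and one final measurement.
\begin{itemize}
\item \emph{Separate measurements.} The procedure is an algorithm with several measurements, and the independent runs can be performed sequentially. By \cite[Lem.~1]{heinrich2002summation}, quoted above, such algorithms can be converted into one-measurement algorithms with the same number of queries and the same output distribution.
\item \emph{Median and sum.} Computing the medians and the sum is part of the classical output map $\varphi$, which is free of query cost.
\item \emph{Common query.} The different algorithms $A_\ell$ may use different queries $Q^{(\ell)}=(m_\ell,m'_\ell,m''_\ell,Z_\ell,\tau_\ell,\beta_\ell)$. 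Build a single query whose index register carries an additional label $\ell$ together with an index $i\in Z_\ell$, setting $\tau(\ell,i)=\tau_\ell(i)$. Its value register is large enough to hold the concatenation $(\beta_0(y),\dots,\beta_k(y))$, and $\beta$ is this concatenation map. Addition modulo $2^{m''}$ on the full value register then acts as addition modulo $2^{m''_\ell}$ on the block belonging to $\ell$. Hence, with the label fixed to $\ell$ by the surrounding unitaries and the other blocks ignored, one call of this query simulates one call of $Q^{(\ell)}_f$.
\item \emph{Registers.} The unitaries of the individual runs act on disjoint tensor factors of the enlarged Hilbert space.
\end{itemize}

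I expect this bookkeeping of registers to be the main obstacle, though it is routine. I would follow Heinrich's construction in \cite{heinrich2002summation} closely rather than redo it in detail.
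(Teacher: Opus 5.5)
Your argument is correct and follows the route of the cited source. Heinrich obtains this corollary by combining two results, which together are exactly your Steps~1--2 applied with $\theta_\ell=e^{-\nu_\ell/8}$:
\begin{itemize}
\item the median amplification $\errquant_{\nu n}(S,F,e^{-\nu/8})\le\errquant_n(S,F,1/4)$ from \cite{heinrich2002summation}, proved by a median-of-repetitions argument as in your Step~1;
\item the additivity $\errquant_{\sum_\ell n_\ell}\bigl(\sum_\ell S_\ell,F,\sum_\ell\theta_\ell\bigr)\le\sum_\ell\errquant_{n_\ell}(S_\ell,F,\theta_\ell)$ from \cite{heinrich2003sobolev}, proved by consecutive runs and a union bound.
\end{itemize}

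The one inaccurate detail is the common-query bullet in Step~3. Addition modulo $2^{m''}$ on a concatenated value register does not act blockwise: carries from the lower blocks enter the block of $\ell$. In addition, the other blocks pick up the $f$-dependent values $\beta_{\ell'}(f(\tau_\ell(i)))$, which stay entangled with the index register and cannot simply be ignored. You can either drop that bullet and rely on Heinrich's treatment of algorithms with several measurements, as the cited proof does, or repair the construction. To repair it, put the idle blocks into the uniform superposition, which is invariant under modular addition, and insert many zero guard bits below the active block. This simulates $Q^{(\ell)}_f$ up to an arbitrarily small error, and that error is absorbed by the slack in your Hoeffding bound, because the exact Chernoff exponent $\tfrac12\ln\tfrac43$ exceeds $\tfrac18$.
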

	
	\subsection*{Quantum summation}
	To conclude this section, we will look at an important example in the context of information-based complexity. The summation problem plays a central role both in the construction of our quantum algorithms for integration in Besov classes and in the proof of the corresponding lower bounds. 
	\begin{defi}\label{defi:summation}
		Let $\mathcal N$ be a finite nonempty set and $1 \le p \le \infty$. The space of functions $f:\mathcal N \to \bbR$ equipped with the norm  
		\begin{align*}
			\|f\|_{L_p^{\mathcal N}}\coloneq \begin{cases}
				\mleft(|\mathcal N|^{-1}\sum_{x\in \mathcal N} |f(x)|^p\mright)^{1/p} & \textrm {if } p < \infty\\
				\max\{|f(x)|: x\in \mathcal N\}    &\textrm{if } p = \infty
			\end{cases}
		\end{align*}
		is denoted by $L_p^\mathcal N$. The problem is to obtain an estimate for
		\begin{align*}
			S_\mathcal N f\coloneq \frac{1}{|\mathcal N|}\sum_{x\in \mathcal N} f(x),
		\end{align*}
		with $f \in L_p^\mathcal N$.
	\end{defi}

	\begin{prop}[{\cite[Proposition 1 and Lemma 6]{heinrich2003sobolev}}]\label{prop:quantsum}
		For $1 \le p \le \infty$ there is a constant $c>0$ such that for all $n\in \bbN$ and all finite sets $\mathcal N$ with $2<n\le cN$ ($N=|\mathcal N|$) the following estimates hold
		\begin{enumerate}[label=(\roman*)]
			\item \label{prop:quantsumi} for $1\le p <2$:
			\begin{align*}
				\begin{split}
					\min &\mleft(n^{-2(1-1/p)},n^{-2/p}N^{2/p-1}\mright) 
					\lesssim \errquant_n\mleft(S_\mathcal N,\sB\mleft(L_p^\mathcal N\mright)\mright)\\ 
					&\lesssim \min\mleft(n^{-2(1-1/p)},n^{-2/p}N^{2/p-1}\mright)\max\mleft( \log(n/\sqrt {N}),1 \mright)^{2/p-1},
				\end{split}
			\end{align*}
			\item \label{prop:quantsumii} for $p=2$ :		
			\begin{align*}
				n^{-1} 
				\lesssim \errquant_n\mleft(S_\mathcal N,\sB\mleft(L_2^\mathcal N\mright)\mright)
				\lesssim n^{-1} \lambda(n,N)^{3/2} \log \lambda(n,N)
			\end{align*}
			with $\lambda(n,N) = \log(N/n)+\log\log(n+1)+2$, and
			\item \label{prop:quantsumiii} for $2<p\le \infty$:
			\begin{align*}
				\errquant_n\mleft(S_\mathcal N,\sB\mleft(L_p^\mathcal N\mright)\mright) \asymp n^{-1}.
			\end{align*}
		\end{enumerate}
		However, the restriciton $n \le cN$ is necessary only for the lower bounds.
	\end{prop}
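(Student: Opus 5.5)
Since the proposition is quoted from \cite{heinrich2003sobolev}, the plan is to reconstruct Heinrich's argument: amplitude estimation gives the upper bounds and a reduction to counting problems gives the lower bounds. The basic building block is the case $p=\infty$ with values in $[0,1]$. Here I would encode $S_{\mathcal N}f$ as the amplitude of a marked state: prepare a uniform superposition over $\mathcal N$, query $f(x)$ into a register, and rotate an ancilla by an angle $\arcsin\sqrt{f(x)}$. Applying amplitude estimation \cite{brassard2000} with $n$ queries then yields an estimate with error $O(n^{-1})$ with probability at least $8/\pi^2$. This gives the upper bound in \ref{prop:quantsumiii} for $p=\infty$. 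The general sign case follows by splitting $f=f_+-f_-$ and using Lemma~\ref{lem:errorofsum<sumoferrors}, with repetition and a median to push the success probability to $3/4$.

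For $p<\infty$ I would decompose $f$ dyadically by magnitude. Write $f=\sum_{i\ge0} f_i$, where $f_i=f\chi_{\{2^{i-1}<|f|\le 2^i\}}$ for $i\ge1$ and $f_0=f\chi_{\{|f|\le 1\}}$. Since $\|f\|_{L_p^{\mathcal N}}\le 1$, Chebyshev's inequality bounds the level sets by $|\{|f|>2^{i-1}\}|\le 2^{-(i-1)p}N$. Each $S_{\mathcal N}f_i$ is a scaled mean of a bounded function supported on a small set. Amplitude estimation therefore gives error roughly $2^i\sqrt{\mu_i}/n_i$, where $\mu_i\lesssim 2^{-ip}$ is the relative measure of the support. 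The $\sqrt{\mu}$ gain comes from the error of amplitude estimation scaling like $\sqrt{a(1-a)}/n$. The queries $n_i$ are distributed over the levels, and Lemma~\ref{lem:errorofsum<sumoferrors} combines the pieces.

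For $p>2$ the series $\sum_i 2^{i(1-p/2)}$ converges geometrically, so $n_i\asymp n2^{-\varepsilon i}$ suffices and gives $n^{-1}$. For $p=2$ every level contributes equally. Cutting at level $i$ with $2^{i}\approx (N/n)^{1/2}$ leaves a tail that can be computed exactly or bounded separately, and the number of relevant levels is about $\lambda(n,N)$. The repetition factors $\nu_\ell\sim\log\lambda$ needed in Lemma~\ref{lem:errorofsum<sumoferrors} then produce the factor $\lambda^{3/2}\log\lambda$. For $p<2$ the balance changes. Large values, those with $|f(x)|\gtrsim n^{2/p}$, hit at most $N n^{-2}$ points, and I would handle them by quantum search combined with classical summation. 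Alternatively, when $n\gtrsim\sqrt N$, one uses the bound $\|f\|_{L_2}\le N^{1/p-1/2}\|f\|_{L_p}$. These two regimes produce the two terms in the minimum, up to the factor $\log(n/\sqrt N)^{2/p-1}$ coming from the number of levels. The main obstacle I expect is this level-wise allocation for $p\le 2$, namely getting the logarithmic exponents exactly right.

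For the lower bounds, take $K\subset\mathcal N$ with $|K|=M$ and functions $f=c\chi_A$ with $A\subset K$, where $|A|\in\{M/2, M/2+\sqrt{M}\}$ or an analogous pair of counts. Distinguishing the two counts with $n$ quantum queries requires $n\gtrsim\sqrt{M/\Delta}$ or a similar bound, by the Nayak--Wu lower bound for approximate counting. The normalization in $L_p^{\mathcal N}$ forces $c=(N/M)^{1/p}$. Optimizing $M\le N$ gives $n^{-1}$ for $p\ge2$, attained at $M\asymp N$, and for $p<2$ gives the stated minimum of $n^{-2(1-1/p)}$ (at $M\asymp n^2$) and $n^{-2/p}N^{2/p-1}$ (at $M=N$). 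The requirement $n\le cN$ is exactly what allows $M\asymp \min(n^2,N)$ to be chosen inside $\mathcal N$.
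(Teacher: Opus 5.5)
The paper does not prove this proposition; it quotes it from Heinrich, Heinrich--Novak (for $p<2$), and Brassard et al.\ and Nayak--Wu (for $p=\infty$). Your overall architecture is the right one: amplitude estimation on dyadic level sets of $|f|$ for the upper bounds, and counting lower bounds for scaled indicator functions. Your $p=\infty$ and $p>2$ sketches are essentially fine, modulo the $n_i^{-2}$ term in the amplitude-estimation error, which you can absorb by setting estimates below $C n_i^{-2}$ to zero.

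\textbf{Upper bounds for $p\le 2$.} For $p<2$ and $n\gtrsim\sqrt N$ you use $\|f\|_{L_2^{\mathcal N}}\le N^{1/p-1/2}\|f\|_{L_p^{\mathcal N}}$ together with (ii). This route gives at best $N^{1/p-1/2}n^{-1}$, which exceeds the claimed $n^{-2/p}N^{2/p-1}$ by the factor $(n/\sqrt N)^{2/p-1}$. For $p=1$, $n\asymp N$ you get $N^{-1/2}$ instead of $N^{-1}$, up to logarithms.

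Conversely, for $n\lesssim\sqrt N$ no search is needed, and none is affordable. With $\tau=n^{2/p}$, the points where $|f|>\tau$ contribute at most $\tau^{1-p}=n^{-2(1-1/p)}$ to the mean and can simply be dropped. Locating up to $N/n^2$ of them would cost about $\sqrt{N\cdot N/n^2}=N/n\ge n$ queries.

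The missing idea is the threshold $\tau=(N/n)^{2/p}$ in the regime $n\gtrsim\sqrt N$:
\begin{itemize}
\item at most $N\tau^{-p}=n^2/N$ points exceed it;
\item Grover-type search finds all of them with about $\sqrt{N\cdot n^2/N}=n$ queries, and their values are then summed exactly;
\item amplitude estimation on the levels below $\tau$ costs $\tau^{1-p/2}/n=n^{-2/p}N^{2/p-1}$.
\end{itemize}
A logarithmic loss in the search step produces a factor of the form $\max(\log(n/\sqrt N),1)^{2/p-1}$.

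Your $p=2$ cutoff $2^i\approx(N/n)^{1/2}$ has the same defect. Up to $n$ points can lie above it; finding them costs $\sqrt{Nn}\gg n$, and dropping them costs up to $(n/N)^{1/2}\gg n^{-1}$. The cutoff has to be of order $N/n$, up to a polylogarithmic factor in $n$, which is why $\lambda$ counts $\log(N/n)+\log\log(n+1)$ levels. Also, the exponent $3/2$ does not come from ``every level contributing equally''. Bounding each of $L$ levels by its maximum with $n/L$ queries per level gives $L^2/n$. The $3/2$ comes from Cauchy--Schwarz, $\sum_i 2^i\sqrt{\mu_i}\le\sqrt L\,(\sum_i 2^{2i}\mu_i)^{1/2}\lesssim\sqrt L$, with $n/(L\log L)$ queries per level.

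\textbf{Lower bounds for $p<2$.} Your parameters do not give the claimed rates. With $|A|\approx M/2$ and $c=(N/M)^{1/p}$, any pair $M/2$ versus $M/2+\Delta$ that $n$ queries cannot distinguish has $\Delta\lesssim M/n$. The means then differ by $c\Delta/N\lesssim (M/N)^{1-1/p}n^{-1}\le n^{-1}$. So balanced counts never beat $n^{-1}$, whereas $\min(n^{-2(1-1/p)},n^{-2/p}N^{2/p-1})\gg n^{-1}$ for $p<2$ and $n\ll N$. For example, at $p=1$ with $M\asymp n^2$ your pair gives $n^{-1}$ instead of $1$.

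The hard instances for $p<2$ are sparse, and the normalization must be $c\asymp(N/|A|)^{1/p}$, driven by $|A|$ rather than $M$:
\begin{itemize}
\item For $n\lesssim\sqrt N$, compare $A=\emptyset$ with $|A|\asymp N/n^2$. The search lower bound $\Omega(\sqrt{N/|A|})$ applies, and the gap is $(|A|/N)^{1-1/p}\asymp n^{-2(1-1/p)}$.
\item For $n\gtrsim\sqrt N$, compare $|A|=t$ with $|A|=t+1$, $t\asymp n^2/N$. Nayak--Wu requires $\Omega(\sqrt{t(N-t)})$ queries, and the gap is $(N/t)^{1/p}N^{-1}\asymp n^{-2/p}N^{2/p-1}$.
\end{itemize}
In this construction, $n\le cN$ is what keeps $t\le N/2$; for $p\ge 2$ it also ensures $\Delta\asymp N/n\ge 1$ in your balanced pair. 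It is not about fitting $M\asymp\min(n^2,N)$ into $\mathcal N$, which is automatic.
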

	
	The upper and lower bounds in the case $p=\infty$ were treated by Brassard et al.\ \cite{brassard2000}, as well as by Nayak and Wu \cite{nayak1999}. Heinrich addressed the case $2 < p < \infty$ in \cite{heinrich2002summation}, $p=2$ in \cite{heinrich2003sobolev} and, together with Novak, the case $1 \le p < 2$ in \cite{heinrich2001optimal}.
	
	\section{Periodic Besov spaces with mixed smoothness}
	Here, we give the Fourier-analytic approach to Besov spaces with dominating mixed smoothness on the $d$-dimensional torus based on \cite[Chapter 2]{schmeisser1987}.
	We denote the $d$-torus by $\bbT^d$, represented in $\bbR^d$ by $\bbT^d=[0,1)^d$, where points $x,y \in \bbR^d$ are identified if and only if $x-y \in \bbZ^d$. 
	
	For $0<p \le \infty$ the usual $L_p(\bbT^d)$ spaces are the sets of measurable functions $f:\bbT^d \to \bbC$ such that the quasi norm
	\begin{align*}
		\|f\|_{L_p(\bbT^d)}\coloneq \begin{cases}
			\mleft( \int_{\bbT^d} |f(x)|^p \diffd x \mright)^{1/p} &\textrm{if } p < \infty,\\
			\esssup_{x \in \bbT^d} |f(x)| &\textrm{if } p=\infty,
		\end{cases}
	\end{align*}
	is finite. In case $p\ge 1$ this is in fact a norm. 
	\begin{defi}[Smooth dyadic decomposition of unity]
		Let $\Phi(\bbR)$ be the set of all sequences $(\varphi_n)^{}_{n \in \bbN_0^{}}\subset C_{}^\infty(\bbR)$ such that
		\begin{enumerate}[label=(\roman*)]
			\item $\supp \varphi_n \subset \{x: 2^{n-1} \chi_{\bbN}^{}(n) \le |x| \le 2^{n+1} \} \quad (n \in \bbN_0)$,
			\item $\sup_{x \in \bbR,{n \in \bbN^{}_0}} 2^{n\ell} \mleft| \frac{\diffd^\ell}{\diffd x^\ell}\varphi^{}_n(x)\mright| < \infty \quad (\ell \in \bbN_0)$, and
			\item $\sum_{n=0}^{\infty} \varphi_n(x)=1 \quad (x \in \bbR)$.
		\end{enumerate}
	\end{defi}
	
	\begin{ex}
		As an example of such a sequence we choose a smooth function $\varphi_0$ supported in $[-2,2]$ with  ${\varphi_0}_{|[-1,1]}=1$ and set $\varphi_n(x):=\varphi_{0}(2^{-n}x)-\varphi_{0}(2^{-n+1}x)$ for $n \in \bbN$.
	\end{ex}
	
	Let $(\varphi_n)_{n \in \bbN_0}\in \Phi(\bbR)$ be arbitrary but fixed in the following. The $\bk\in \bbZ^d$-th Fourier coefficient of an integrable $d$-variate periodic function $f$ is given by
	\begin{align*}
		\hat f (\bk) \coloneq \int_{\bbT^d} f(x)e^{-2\pi i \bk \cdot x} \diffd x.
	\end{align*}
	
	For $\bj \in \bbN_0^d$ the $\bj$-th building block $f^\varphi _\bj$ of $f \in L_1(\bbT^d)$ is defined as
	\begin{align*}
		f_\bj^\varphi:\bbT^d \to \bbC, x\mapsto \sum_{\bk \in \bbZ^d}  \hat f (\bk) e^{2\pi i \bk \cdot x}\prod_{i=1}^{d} \mleft( \varphi_{j_i} (k_i) \mright).
	\end{align*}
	
	\begin{defi}
		For $0<p,q\le\infty$ and $r>(1/p-1)_+$ the mixed periodic Besov spaces are the collection of all real valued functions $f\in L_1(\bbT^d)$ such that
		\begin{align*}
			\| f \|_{S_{p,q}^r B(\bbT^d),\varphi} \coloneq \mleft( \sum_{\bj \in \bbN_0^d} 2^{|\bj|_1rq} \|f^\varphi_\bj\|_{L_p(\bbT^d)}^q \mright)^{1/q}<\infty,
		\end{align*}
		with the usual modification $\| f \|_{S_{p,\infty}^r B(\bbT^d),\varphi}\coloneq \sup_{\bj \in \bbN_0^d} (2^{|\bj|_1 r} \|f^\varphi_{\bj}\|_{L_p(\bbT^d)})<\infty $. They are denoted as $S_{p,q}^r B(\bbT^d)$.
	\end{defi}
	
	This definition is independent of the specific choice of $\varphi \in \Phi(\bbR)$ in the sense of equivalent norms or quasi-norms \cite[Sect. 2.2.3]{schmeisser1987}. For simplicity, we will omit the letter $\varphi$ from the notation. Furthermore, the defined spaces are complete \cite[Sect. 2.2.4]{schmeisser1987}. In the present work we are interested in the case $r>1/p$. In this case each element in $S_{p,q}^r B(\bbT^d)$ contains a continuous representative and there is a $c>0$ such that \cite[Lem. 7]{siul07}
	\begin{align}
		\|f\|_{C(\bbT^d)} \le c \| f \|_{\BS}.
		\label{eq:besovemb}
	\end{align}
	
	\	
	\section{The Faber-Schauder basis}
	In this section we construct multilevel quantum algorithms for the approximation of $I_d(f)$, see \eqref{intontorus}. Our construction uses the tensor-product Faber-Schauder decomposition of continuous functions on the $d$-dimensional torus. With this, we break down the problem into a finite number of summation problems.
	
	We start with the definition of the Faber-Schauder system in the univariate case as a foundation for the multivariate decomposition. 
	\begin{defi}
		Let $v:\bbR \to \bbR,x \mapsto(1-|2x-1|)_+$. The Faber-Schauder system is the set of functions
		\begin{align*}
			\{ v_{j,k}: j \in \bbN_{-1}, k \in \bbB_j \},
		\end{align*}
		where $v_{j,k}:\bbT \to \bbR$ are defined as follows:
		\begin{align*}
			v_{-1,0}(x)&\coloneqq 1\quad\textrm{and}\quad
			v_{j,k}(x) \coloneqq v(2^jx-k) \textrm{ for } j\in \bbN_0, k\in \bbB_j.
		\end{align*}
		\label{defi:faber-basis}
	\end{defi}
	
	\begin{figure}
		\centering
		\begin{subfigure}{\textwidth}
			\centering
			\begin{tikzpicture}[scale=2.4]
				\begin{scope}[shift={(0,0)}]
					\achsen;
					\draw[domain=0:1, samples=300] plot (\x, {max(1-abs(2*\x-1),0)});
					\node[above,font=\small,inner sep=2pt] at (1/2,1) {$v_{0,0}$};
				\end{scope}
				
				\begin{scope}[shift={(1.4,0)}]
					\achsen;
					\draw[domain=0:1, samples=300] plot (\x, {max(1-abs(2*2*\x-1),0)});
					\node[above,font=\small,inner sep=2pt] at (1/4,1) {$v_{1,0}$};
				\end{scope}
				
				\begin{scope}[shift={(2.8,0)}]
					\achsen;
					\draw[domain=0:1, samples=300] plot (\x, {max(1-abs(2*2*\x-1-2),0)});
					\node[above,font=\small,inner sep=2pt] at (3/4,1) {$v_{1,1}$};
				\end{scope}
				
				\begin{scope}[shift={(4.2,0)}]
					\achsen;
					\draw[dashed,domain=0:1, samples=300] plot (\x, {max(1-abs(2*\x-1),0)});
					\node[above,font=\small,inner sep=2pt] at (1/2,1) {$v_{0,0}$};
					\draw[domain=0:1, samples=300] plot (\x, {max(1-abs(2*2*\x-1),0)});
					\node[above,font=\small,inner sep=2pt] at (1/4,1) {$v_{1,0}$};
					\draw[domain=0:1, samples=300] plot (\x, {max(1-abs(2*2*\x-1-2),0)});
					\node[above,font=\small,inner sep=2pt] at (3/4,1) {$v_{1,1}$};
				\end{scope}
			\end{tikzpicture}
			\caption{Hierarchical Faber functions for the levels $j=0,1$ and their union. Upon transitioning from level $j \ge 0$ to $j+1$, the number of hat functions doubles, whereas the area beneath each function is reduced by half.}
			\label{fig:faberhats}
		\end{subfigure}
		\par\bigskip
		\begin{subfigure}{\textwidth}
			\centering
			\begin{tikzpicture}[yscale=2.5,xscale=2.7,declare function={f(\t)=0.7*sin((1.4*\t) r )+0.1;},declare function={g(\t)=f(\t);}]
				\begin{scope}[shift={(0,0)}]
					\draw[-{Stealth}] (-0.14,0) -- (2.15,0);
					\draw (0,-0.015) -- (0,0.015);
					\draw (1,-0.015) -- (1,0.015);
					\draw (2,-0.015) -- (2,0.015);
					\node[below,font=\small] at (0,0) {$2^{-j}k$};
					\node[below,font=\small] at (1,0) {$2^{-j}(k+1/2)$};
					\node[below,font=\small] at (2,-0.015) {$2^{-j}(k+1)$};
					
					\draw[domain=-0.05:2.05, samples=300] plot (\x, {g(\x)} );
					\node[above right] at (1.6,0.6) {$f(x)$};
					
					\draw [line cap=round,dash pattern=on 0pt off 1pt,gray] (0,0) -- (0,{g(0)});
					\draw [line cap=round,dash pattern=on 0pt off 1pt,gray] (1,0) -- (1,{g(1)});
					\draw [line cap=round,dash pattern=on 0pt off 1pt,gray] (2,0) -- (2,{g(2)});
					\draw[dashed] (0,{g(0)}) -- (2,{g(2)});
					
					\draw[decorate,decoration={brace,mirror,amplitude=4pt}] (1, {(g(2)+g(0))/2}  ) -- (1,{g(1)}) node[midway,right=2pt,font=\small]{$d_{j,k}(f)$};
				\end{scope}
				
				\begin{scope}[shift={(2.6,0)}]
					\draw[-{Stealth}] (-0.14,0) -- (2.15,0);
					\draw (0,-0.015) -- (0,0.015);
					\draw (1/2,-0.015) -- (1/2,0.015);
					\draw (1,-0.015) -- (1,0.015);
					\draw (3/2,-0.015) -- (3/2,0.015);
					\draw (2,-0.015) -- (2,0.015);
					
					\draw[domain=-0.05:2.05, samples=300] plot (\x, {g(\x)} );
					\node[above right] at (1.6,0.6) {$f(x)$};
					
					\draw[line cap=round,dash pattern=on 0pt off 1pt] (0,{g(0)})  -- (1/2,{g(1/2)}) -- (1,{g(1)}) -- (3/2,{g(3/2)}) -- (2,{g(2)}) ;
					\draw[line cap=round,dash pattern=on 1pt off 1pt] (0,{g(0)}) -- (1,{g(1)}) -- (2,{g(2)}) ;
					\draw[line cap=round,dash pattern=on 2pt off 1pt] (0,{g(0)})  -- (2,{g(2)}) ;
					
					\node[below,font=\small] at (0,0) {$2^{-j}k$};
					\node[below,font=\small] at (1,0) {$2^{-j}(k+1/2)$};
					\node[below,font=\small] at (2,-0.015) {$2^{-j}(k+1)$};
				\end{scope}
			\end{tikzpicture}
			\caption{Left: a Faber coefficient as difference between the function value at the midpoint and the mean of the endpoint values. Right: Piecewise linear interpolants on three nested dyadic grids.}
			\label{fig:fabercoeff}
		\end{subfigure}
		\caption{Illustration of the univariate Faber expansion}
		\label{fig:gesamt}
	\end{figure}
	
	These functions build a piecewise linear hierarchical basis \cite{bungartz2004}. The first non-negative levels of the Faber system are illustrated in Figure \ref{fig:faberhats}. 
	
	Faber \cite{faber1908} pointed out that every continuous function $f$ on $\bbT$ has a uniform series expansion in the following sense
	\begin{align}\label{eq:faberunivar}
		f=\sum_{j \in \bbN_{-1}}\sum_{k\in \bbB_j} d_{j,k}(f) v_{j,k}
	\end{align}
	with coefficients 
	\begin{align*}
		d_{j,k}(f)\coloneqq \begin{cases}
			f(0) \quad &\textrm{if } j=-1,\\
			-\frac{1}{2}\Delta_{2^{-j-1}}^2f(2^{-j}k) & \textrm{if } j \in \bbN_0,
		\end{cases}
	\end{align*}
	where $\Delta^2_h f (x)$ denotes the second difference of $f$ at $x$ with step size $h$, defined by
	\begin{align*}
		\Delta_h^2f(x) = f(x+2h)-2f(x+h)+f(x).
	\end{align*}
	An illustration of the Faber coefficients $d_{j,k}(f)$ is given in Figure \ref{fig:fabercoeff}.
	
	For multivariate functions $f \in C(\bbT^d)$ by fixing all variables except one and successively applying \eqref{eq:faberunivar} we get the multivariate Faber expansion 
	\begin{align}\label{eq:fabermultivar} 
		f=\sum_{\bj \in \bbN_{-1}^d}\sum_{\bk\in \bbB_\bj} d_{\bj,\bk}(f) v_{\bj,\bk}.
	\end{align}
	Here $\bbB_\bj$ denotes the product $\bbB_{j_1} \times \dots \times \bbB_{j_d} $,
	\begin{align}
		v_{\bj,\bk}(x) \coloneqq \prod_{i=1}^d v_{j_i,k_i}(x_i)
		\label{eq:multifaber}
	\end{align}
	and 
	\begin{align}\label{eq:defi:fabercoeff}
		d_{\bj,\bk}(f) \coloneqq \mleft(-\frac{1}{2}\mright)^{|\{i:j_i\ge 0\}|}\sum_{\bl \in U_\bj} (-2)^{|\{i:\ell_i=1\}|} f(x_{\bj,\bk,\bl}),
	\end{align}
	whereby $U_\bj \coloneq U_{j_1} \times \dots \times U_{j_d}$ with 
	\begin{align}\label{eq:defi:Uj}
		U_{j_i} \coloneq\begin{cases}
			\{0\} &\textrm{if } j_i = -1,\\
			\{0,1,2\} &\textrm{if } j_i \in \bbN_0.
		\end{cases}
	\end{align}
	An illustration of the tensorised Faber functions is given in \cite{bungartz2004}. The function $f$ is sampled at the points
	\begin{align}\label{eq:samplepoints}
		x_{\bj,\bk,\bl} \coloneqq \mleft( 2^{-(j_1)_+}(k_1+\ell_1/2),\dots,2^{-(j_d)_+}(k_d+\ell_d/2) \mright).
	\end{align}
	
	\begin{defi}
		For $0<p,q\le \infty$ and $r\in \bbR$ let $s_{p,q}^rb$ be the space of all real-valued sequences $(\lambda_{\bj,\bk})_{\bj\in \bbN_{-1}^d, \bk \in \bbB_\bj}$ such that
		\begin{align*}
			\| \lambda_{\bj,\bk} \|_{s_{p,q}^rb} \coloneq \mleft[ \sum_{\bj \in \bbN_{-1}^d} 2^{|\bj|_1(r-1/p)q}\mleft( \sum_{\bk \in \bbB_{\bj}} |\lambda_{\bj,\bk}|^p  \mright)^{q/p} \mright]^{1/q}<\infty,
		\end{align*}
		with the usual modifications in the cases where $p=\infty$ or $q=\infty$.
	\end{defi}

	\begin{prop}\cite[Proposition 3.4]{hinrichs2016}\label{prop:SBinsb}
		For $1/2<p\le \infty, 0<q\le \infty$ and $1/p<r<2$ and $f\in C(\bbT^d)$ the following holds
		\begin{align*}
			\|d_{\bj,\bk}(f)\|_{s^r_{p,q}b} \lesssim \|f\|_{\BS}.
		\end{align*}
	\end{prop}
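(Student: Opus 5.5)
The plan is to write each Faber coefficient as a linear combination of the Fourier building blocks $f_\bj^\varphi$ and then estimate level by level. Since $r>1/p$, the embedding \eqref{eq:besovemb} makes $f$ continuous. The expansion $f=\sum_{\bm{m}\in\bbN_0^d} f_{\bm m}^\varphi$ then converges uniformly, because $\|f_{\bm m}^\varphi\|_\infty \lesssim 2^{|\bm m|_1/p}\|f_{\bm m}^\varphi\|_p$ by Nikol'skii's inequality for trigonometric polynomials, and $r>1/p$. By linearity and continuity of point evaluation we get $d_{\bj,\bk}(f)=\sum_{\bm m} d_{\bj,\bk}(f_{\bm m}^\varphi)$. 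The operator $d_{\bj,\bk}$ is a tensor product of univariate second differences $-\tfrac12\Delta^2_{2^{-j_i-1}}$, or of point evaluations in the directions with $j_i=-1$. I will therefore estimate $\big(\sum_{\bk}|d_{\bj,\bk}(f_{\bm m}^\varphi)|^p\big)^{1/p}$ separately in each coordinate, splitting according to whether $m_i\le j_i$ or $m_i>j_i$.

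The key one-dimensional estimates, applied coordinatewise via a mixed Marcinkiewicz--Zygmund/Nikol'skii sampling inequality for trigonometric polynomials with frequencies in dyadic rectangles, are the following. If $m_i\le j_i$, the second difference of step $h=2^{-j_i-1}$ gains the factor $(2^{m_i}h)^2\asymp 2^{2(m_i-j_i)}$. This comes from writing $\Delta_h^2 g(x)=\int$ of $g''$ over $[x,x+2h]$ and using Bernstein's inequality. If $m_i>j_i$, there is no gain from the difference and we just bound by point values. In both cases the discrete sampling on the grid $2^{-j_i}\bbZ$ satisfies
\[
\Big(2^{-j_i}\sum_{k}|g(2^{-j_i}k+\cdot)|^p\Big)^{1/p}\lesssim 2^{(m_i-j_i)_+/p}\|g\|_p
\]
for $g$ of frequency $\lesssim 2^{m_i}$. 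Combining the coordinates gives
\[
2^{|\bj|_1(r-1/p)}\Big(\sum_\bk |d_{\bj,\bk}(f_{\bm m}^\varphi)|^p\Big)^{1/p}\lesssim \prod_{i=1}^d \omega(m_i-j_i)\; 2^{|\bm m|_1 r}\|f^\varphi_{\bm m}\|_p,
\]
with $\omega(t)=2^{(2-r)t}$ for $t\le 0$ and $\omega(t)=2^{-(r-1/p)t}$ for $t>0$. Levels $j_i=-1$ are handled the same way with $2^{j_i}$ replaced by $1$.

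Since $r<2$ and $r>1/p$, $\omega$ is summable over $\bbZ$. The $s^r_{p,q}b$ norm is then bounded by an $\ell_q$ norm of a discrete convolution of $\big(2^{|\bm m|_1 r}\|f_{\bm m}^\varphi\|_p\big)_{\bm m}$ with the tensor kernel $\omega^{\otimes d}$. Young's inequality, or for $q<1$ the $q$-triangle inequality together with $\sum\omega^q<\infty$, which still holds because $\omega$ decays geometrically, yields $\lesssim\|f\|_{\BS}$. For $p<1$ I replace the triangle inequality over $\bm m$ inside the $\ell_p$ sum by the $p$-triangle inequality; the geometric decay again absorbs this.

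The main obstacle is the sampling step for $p<1$, that is $1/2<p<1$. Here the pointwise values $|g(x)|$ must be controlled by Peetre maximal functions $\sup_y |g(x-y)|/(1+2^{m}|y|)^{a}$ with $a>1/p$, which are bounded in $L_p$ for band-limited $g$. The second difference must also be bounded by such a maximal function with the factor $2^{2(m-j)}$. I would carry this out in tensorized (mixed) form, using the Peetre maximal inequality in each variable separately. The resulting cost $2^{a(m_i-j_i)_+}$ is harmless, since the decay $2^{-r(m_i-j_i)}\cdot 2^{(m_i-j_i)/p}$ only requires taking $a$ close to $1/p$ and $r>1/p$.
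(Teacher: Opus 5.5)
Your proposal is correct. The paper does not prove this proposition itself; it imports it from \cite{hinrichs2016}, where (adapting Triebel's non-periodic Faber-basis argument) it is obtained by essentially the same route you describe: the uniformly convergent decomposition $d_{\bj,\bk}(f)=\sum_{\bm m} d_{\bj,\bk}(f^\varphi_{\bm m})$, the coordinatewise split into $m_i\le j_i$ (gain $2^{2(m_i-j_i)}$ from the second difference via Bernstein) and $m_i>j_i$ (pure sampling), control of point values by mixed Peetre maximal functions with $1/p<a<r$, and a geometric convolution summation that works precisely because $1/p<r<2$.
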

	
	\section{Quantum algorithms for numerical integration} 
	
	\begin{proof}[Proof of Theorem \ref{thm:besovintup}]
		Let $1\le p,q < \infty$ (The cases $p=\infty$ or $q=\infty$ are treated analogously using the usual modifications) and $\max\{1/p,2/q-1\}<r<2$.
		
		\smallskip
		\noindent\textit{Step 1:} \textbf{Preliminary work and decomposition of the integral.} We set $s \coloneq \min\{p,q\}$ and for $\ell,L \in \bbN_0$ we set 
		\begin{align*}
			\mcV_\ell \coloneq\mleft\{\bj \in \bbN^d_{-1}:\sum_{i=1}^d j_i = \ell-d\mright\}, \quad \mcW_\ell \coloneq \{ (\bj,\bk): \bj \in \mcV_\ell, \bk\in \bbB_\bj \}
		\end{align*}
		and 
		\begin{align}\label{eq:defi:J}
			J_L f \coloneq \sum_{\ell=0}^L 2^{-\ell} \sum_{(\bj,\bk)\in \mcW_\ell} d_{\bj,\bk}(f),
		\end{align}
		where $d_{\bj,\bk}(f)$ are the Faber coefficients in \eqref{eq:fabermultivar}. We note that
		\begin{align}\label{eq:defWl}
			W_\ell \coloneq|\mcW_\ell| \asymp 2^\ell|\mcV_\ell|\asymp 2^\ell \ell^{d-1}.
		\end{align}
		Further let $F$ and $\tilde F_\ell$ denote the closed unit balls of $\BS$ and $L_{s}^{\mcW_\ell}$ respectively.
		
		Now let us fix some parameters for the algorithms. Let $\delta>0$ such that
		\begin{align}\label{eq:defi:delta}
			\delta < \min \mleft( r,\frac{s(1+r)}{2}-1 \mright),
		\end{align}
		and set $K\coloneq 1+2\sum_{\ell=1}^\infty \lceil 8 (2 \ln \ell+\ln 8)\rceil 2^{-\ell\delta}$.

		Without loss of generality we assume $n \in \bbN$ with $n \ge 6(6d)^{d-1}K$ and set
		\begin{align}
			\lowl &\coloneq\max\{\ell \in \bbN: 3^d  \ell^{d-1}2^\ell K \le n\}, \label{defi:lowl}\\
			\highl &\coloneq\min\{\ell \in \bbN: \ell^{(d-1)(1-1/q)}2^{-r\ell}\le \lowl^{-(d-1)/q}2^{-(r+1)\lowl}\}, \label{eq:defi:highl}
		\end{align}
		and
		\begin{align}\label{defi:ntilde}
			\tilde n \coloneq 3^d\lowl ^{d-1}2^\lowl.
		\end{align}
		Using the approximation $J_L$ defined in \eqref{eq:defi:J}, we decompose the integral into the sum
		\begin{align*}
			I_d f= (I_df-J_\highl f)+J_\lowl f+(J_\highl f- J_\lowl f),
		\end{align*}
		and note that $n \ge K \tilde n > \tilde n$.
		So we can apply Lemma 6 (i) from \cite{heinrich2002summation} and Lemma 2 from \cite{heinrich2003sobolev} and obtain the following estimate
		\begin{align}\label{mainerrorest}
			\errquant_n(I_d,F)\le \sup_{f \in F}|I_df-J_\highl f|+\errquant_{\tilde n}(J_\lowl,F,0)+\errquant_{n-\tilde n}(J_\highl -J_\lowl ,F).
		\end{align}
		In what follows, we evaluate the three terms separately.
		
		\smallskip
		\noindent\textit{Step 2.} \textbf{Estimation of the truncation error.} For $f \in F$ the multivariate Faber expansion \eqref{eq:fabermultivar} converges uniformly \cite{hinrichs2016}. Together with \eqref{eq:multifaber} and Definition \ref{defi:faber-basis} this yields
		\begin{align*}
			|I_df&-J_\highl f| 
			\le \sum_{\ell=\highl+1}^{\infty}\sum_{(\bj,\bk)\in \mcW_\ell}  | d_{\bj,\bk}(f) | \int_{[0,1)^d} v_{\bj,\bk}(x) \diffd x \\
			&\lesssim \sum_{\ell=\highl+1}^{\infty}\sum_{\bj \in \mcV_\ell} 2^{-|\bj|_1/p} \mleft(\sum_{\bk\in \bbB_\bj} | d_{\bj,\bk}(f) |^p\mright)^{1/p} \\
			&\le \mleft( \sum_{\ell=\highl+1}^{\infty}\sum_{\bj \in \mcV_\ell} 2^{-r|\bj|_1 q'} \mright)^{1/q'} \mleft[ \sum_{\bj \in \bbN_{-1}^d}  2^{(r-1/p)|\bj|_1q} \mleft(\sum_{\bk\in \bbB_\bj} | d_{\bj,\bk}(f) |^p\mright)^{q/p} \mright]^{1/q}\\
			&\lesssim\mleft( \sum_{\ell=\highl+1}^\infty |\mcV_\ell|2^{-r\ell q'}   \mright)^{1/q'}\|d_{\bj,\bk}(f)\|_{s^r_{p,q}b} \\
			&\lesssim \mleft(\sum_{\ell=\highl+1}^\infty  \ell^{d-1} 2^{-r\ell q'}\mright)^{1/q'}\|d_{\bj,\bk}(f)\|_{s^r_{p,q}b} \\
			&\lesssim \highl^{(d-1)(1-1/q)}2^{-r\highl}\|d_{\bj,\bk}(f)\|_{s^r_{p,q}b},\\
		\end{align*}
		whereby Hölder's inequality is used in the second estimate for the sum over $\bbB_\bj$ with exponent $p$ and in the third estimate with the conjugate exponents $q$ and $q'$ ($1/q+1/q'=1$). 
		
		Applying Proposition \ref{prop:SBinsb}, \eqref{eq:defi:highl} and \eqref{defi:lowl} we thus obtain the following estimate for the first term in equation \eqref{mainerrorest}
		\begin{align}
			\sup_{f \in F}|I_df-J_\highl f| 
			&\lesssim \highl^{(d-1)(1-1/q)}2^{-r\highl}
			\lesssim \lowl^{-(d-1)/q}2^{-(r+1)\lowl}\notag\\
			&= (2^\lowl \lowl^{d-1})^{-(r+1)} \lowl^{(d-1)(r+1-1/q)} \notag\\
			&\lesssim n^{-r-1}(\log n)^{(d-1)(r+1-1/q)}. \label{errortrunc}
		\end{align}
		
		\smallskip
		\noindent\textit{Step 3.} \textbf{Evaluation of the lower levels.} By \eqref{eq:defi:fabercoeff} and \eqref{eq:defi:Uj} every coefficient $d_{\bj,\bk}(f)$ depends on at most $3^d$ sampling points. Therefore the number of samples entering $J_\lowl f$ can be estimated using \eqref{defi:ntilde} by 
		\begin{align*}
			\sum_{\ell=0}^{\lowl} \sum_{\bj \in \mcV_\ell} |\bbB_\bj|| U_\bj|
			&\le \frac{3^d}{2^d} \sum_{\ell=0}^{\lowl} 2^\ell |\mcV_\ell| 
			\le \frac{3^d}{2^d (d-1)!} \sum_{\ell=0}^{\lowl} (\ell+d-1)^{d-1}2^\ell\\
			&\le  3^d\frac{\lowl^{d-1}}{(d-1)!}  2^{\lowl}
			\le 3^d \lowl^{d-1}2^{\lowl}\le \tilde n.
		\end{align*}
		This yields 
		\begin{align}\label{errorlowerlevels}
			\errquant_{\tilde n}(J_{\lowl},F,0)=0,
		\end{align}
		so $J_{\lowl}$ can be computed to arbitrary precision using sufficiently many qubits.				
	
		\smallskip
		\noindent\textit{Step 4.} \textbf{Reduction of the intermediate levels to quantum summation.}
		This is the technical main part of the proof. To estimate the last term in equation \eqref{mainerrorest}
		we decompose $J_\highl  -J_\lowl$ into its levels
		\begin{align}\label{eq:tele}
			J_\highl  -J_\lowl = \sum_{\ell=\lowl+1}^{\highl} J'_\ell,
		\end{align}
		where, by \eqref{eq:defi:J}
		\begin{align}\label{eq:jstroke}
			J'_\ell f \coloneq J_\ell f-J_{\ell-1}f=2^{-\ell} \sum_{(\bj,\bk)\in \mcW_\ell} d_{\bj,\bk}(f).
		\end{align}
		For $\ell \in \{\lowl+1,\cdots,\highl\}$ we set
		\begin{align}\label{eq:defnuldefnl}
			\nu_\ell \coloneq \lceil 8 (2 \ln(\ell-\lowl)+\ln 8)\rceil, \quad
			n_\ell \coloneq 
			2^\lowl \lowl^{d-1}2^{-(\ell-\lowl)\delta}
		\end{align}
		to get
		\begin{align*}
			\sum_{\ell=\lowl+1}^{\highl} e^{-\nu_\ell/8} 
			\le \frac{1}{8}\sum_{\ell=1}^{\infty}\frac{1}{\ell^2} 
			< \frac{1}{4}
		\end{align*}
		and with \eqref{defi:lowl}
		\begin{align*}
			2\cdot 3^d\sum_{\ell=\lowl+1}^{\highl} \nu_\ell \lfloor n_\ell \rfloor 
			&\le 3^d \lowl^{d-1} 2^\lowl  2\sum_{\ell=1}^{\infty} \lceil 8 (2 \ln \ell+\ln 8)\rceil 2^{-\ell\delta}\notag \\
			&\le \tilde n (K-1)
			\le n-\tilde n.
		\end{align*}
		
		By using \eqref{eq:tele} and applying Lemma \ref{lem:errorofsum<sumoferrors} we see 
		\begin{align}\label{eq:errsumJl'}
			\errquant_{n-\tilde n}(J_\highl  -J_\lowl,F)
			\le \sum_{\ell=\lowl+1}^{\highl} \errquant_{2\cdot3^d\lfloor n_\ell \rfloor}( J'_{\ell},F).
		\end{align}
		
		At this point we define mappings $\Gamma_\ell: F \to L_s^{\mcW_\ell}, \ell=\lowl,\dots,\highl$ which allow us to reduce the problem (in the sense of Lemma \ref{lem:quantemb}) to the computation of means in $L_s^{\mcW_\ell}$. For this, let $m^*\in \bbN$ such that 
		\begin{align}\label{eq:defm*}
			2^{-m^*/2}
			&\le 2^{-r\highl} \highl^{-d}\quad \text{and} \quad 
			c \le 2^{m^*/2-1},
		\end{align}
		where $c$ is the constant from \eqref{eq:besovemb}. We first define mappings $\beta, \gamma$ and $\eta_{\ell,\bl}, \varrho_\ell$ for ($\bl \in \{0,1,2\}^d$)  by
		\begin{align}
			\begin{split}\label{eq:def_beta_gamma}
				\beta &: \bbR \to \bbB_{m^*}, z \mapsto \min\mleft( (\lfloor 2^{m^*/2}z+2^{m^*-1} \rfloor)_+,2^{m^*}-1 \mright),\\
				\gamma &: \bbB_{m^*} \to \bbR, y \mapsto 2^{-m^*/2}y-2^{m^*/2-1},
			\end{split}\\
			\eta_{\ell,\bl}&: \mcW_\ell \to \bbT^d, (\bj,\bk) \mapsto 
			\begin{cases}
				x_{\bj,\bk,\bl} &\textrm{if } \bl \in U_\bj \\
				0 &\textrm{otherwise},
			\end{cases} \nonumber \\
			\varrho_\ell&: \mcW_\ell \times \bbB_{m^*}^{\{0,1,2\}^d} \to \bbR, ((\bj,\bk),y) \mapsto \mleft(-\frac{1}{2}\mright)^{|\{i:j_i\ge 0\}|}\frac{W_\ell}{2^{\ell}}\sum_{\bl \in U_\bj}(-2)^{|\{i:\ell_i=1\}|}\gamma(y_{\bl}), \nonumber
		\end{align}
		where $x_{\bj,\bk,\bl}$ are defined in (\ref{eq:samplepoints}). Finally we set
		\begin{align*}
			\Gamma_\ell(f)(\bj,\bk)\coloneq \varrho_\ell((\bj,\bk),(\beta\circ f\circ \eta_{\ell,\bl}(\bj,\bk))_{\bl \in \{0,1,2\}^d}).
		\end{align*}
		
		For $|z| \le 2^{m^*/2-1}$ we have
		\begin{align}\label{distretizationerror}
			|z-\gamma\circ \beta(z)|\le 2^{-m^*/2},
		\end{align}
		which, due to \eqref{eq:defm*}, applies in particular for $f\in F$. Again with \eqref{eq:defm*} and \eqref{eq:defi:fabercoeff} we get
		\begin{align}
			\label{eq:diffdgamma}
			\mleft|d_{\bj,\bk}(f)W_\ell2^{-\ell}-\Gamma_\ell(f)(\bj,\bk)\mright|
			&\le \frac{W_\ell}{2^{\ell}} \sum_{\bl \in U_\bj} 2^{|\{i:\ell_i=1\}|}| f(x_{\bj,\bk,\bl})-\gamma\circ \beta \circ f(x_{\bj,\bk,\bl})| \notag\\ 
			&\lesssim \ell^{d-1} 2^{-m^*/2}
			\le \highl^{-1} 2^{-r\highl},
		\end{align}
		and therefore \eqref{eq:jstroke} yields
		\begin{align*}
			| J_\ell'f-S_{\mcW_\ell} \Gamma_\ell(f) | 
			&\le \frac{1}{W_\ell} \sum_{(\bj,\bk)\in \mcW_\ell} |d_{\bj,\bk}(f)W_\ell 2^{-\ell}-\Gamma_\ell(f)(\bj,\bk) | 
			\lesssim \highl^{-1} 2^{-r\highl},
		\end{align*}
		where $S_{\mcW_\ell}$ is the summation operator, see Definition \ref{defi:summation}. Using \cite[Lem. 6 (i)]{heinrich2002summation}, we thus obtain
		\begin{align}\label{eq:errJl'}
			\errquant_{2\cdot 3^d\lfloor n_\ell\rfloor}( J'_{\ell},F) & \le
			\errquant_{2\cdot 3^d\lfloor n_\ell \rfloor}( S_{\mcW_\ell} \Gamma_\ell,F) + \highl^{-1} 2^{-r\highl}.
		\end{align}
		
		Now we estimate the norm of $\Gamma_\ell(f)$ in $L^{\mcW_\ell}_{s}$ for $f \in F$. By using Hölder's inequality (regarding the outer sum in the case where $s = p$ and the inner sum in the other case) and Proposition \ref{prop:SBinsb} we get
		\begin{align*}
			\| d_{\bj,\bk}(f)\|_{L^{\mcW_\ell}_{s}} 
			&=\mleft(  \frac{1}{W_\ell}\sum_{\bj\in \mcV_\ell} \sum_{\bk \in \bbB_\bj}|d_{\bj,\bk}(f)|^{s} \mright)^{1/s}\\
			&\lesssim 2^{-r\ell}\ell^{-(d-1)/q}\mleft(\sum_{\bj\in \mcV_\ell} 2^{|\bj|_1(r-1/p)q}\mleft( \sum_{\bk \in \bbB_\bj}|d_{\bj,\bk}(f)|^p \mright)^{q/p}\mright)^{1/q}\\
			&\lesssim 2^{-r\ell}\ell^{-(d-1)/q} \|d_{\bj,\bk}(f)\|_{s^r_{p,q}b}\\
			&\lesssim 2^{-r\ell}\ell^{-(d-1)/q}.
		\end{align*}
		
		In combination with \eqref{eq:diffdgamma},\eqref{eq:defWl} and \eqref{eq:defm*} this results in
		\begin{align*}
			\|\Gamma_\ell(f)\|_{L^{\mcW_\ell}_{s}}
			&\le   W_\ell 2^{-\ell} \| d_{\bj,\bk}(f)\|_{L^{\mcW_\ell}_{s}}
			+ \| \Gamma_\ell(f)(\bj,\bk) - d_{\bj,\bk}(f)W_\ell2^{-\ell}   \|_{L_\infty^{\mcW_\ell}}\\
			&\lesssim 2^{-r\ell}\ell^{(d-1)(1-1/q)} + \ell^{-1}2^{-r\ell}\\
			&\lesssim 2^{-r\ell} \ell^{(d-1)(1-1/q)},
		\end{align*}
		i.e. there is a $c>0$ such that
		\begin{align*}
			\Gamma_\ell(F) \subseteq c 2^{-r\ell}\ell^{(d-1)(1-1/q)} \tilde F_\ell.
		\end{align*}
		
		An application of Lemma \ref{lem:quantemb} yields
		\begin{align*}
			\errquant_{2\cdot 3^d\lfloor n_\ell \rfloor}( S_{\mcW_\ell} \Gamma_\ell,F)
			\lesssim 2^{-r\ell}\ell^{(d-1)(1-1/q)} \errquant_{\lfloor n_\ell \rfloor}( S_{\mcW_\ell} , \tilde F_\ell )
		\end{align*}
		and together with \eqref{eq:errsumJl'} and \eqref{eq:errJl'} we obtain
		\begin{align*}
			\errquant_{n-\tilde n}(J_\highl-J_\lowl,F) 
			&\lesssim  2^{-r\highl}+\sum_{\ell=\lowl+1}^{\highl} 2^{-r\ell}\ell^{(d-1)(1-1/q)} \errquant_{\lfloor n_\ell \rfloor }( S_{\mcW_\ell} ,\tilde F_\ell).
		\end{align*}
		With \eqref{eq:defi:highl} we estimate the first term on the right hand side to
		\begin{align*}
			2^{-r\highl}
			&\le \highl^{(d-1)(1-1/q)}2^{-r\highl} 
			\le \lowl^{-(d-1)/q}2^{-(r+1)\lowl}\notag \\
			&\lesssim n^{-r-1}(\log n)^{(d-1)(r+1-1/q)}
		\end{align*}
		and get
		\begin{align}\label{eq:errIdfin}
			\errquant_{n-\tilde n}(J_\highl-J_\lowl,F) 
			&\lesssim  n^{-r-1}(\log n)^{(d-1)(r+1-1/q)}+E(n).
		\end{align}
		with 
		\begin{align*}
			E(n)\coloneq \sum_{\ell=\lowl+1}^{\highl} 2^{-r\ell}\ell^{(d-1)(1-1/q)} \errquant_{\lfloor n_\ell \rfloor }( S_{\mcW_\ell} ,\tilde F_\ell)
		\end{align*}
		
		\smallskip
		\noindent\textit{Step 5.} \textbf{Estimation of the summation error.} It remains to find estimates for $E(n)$ in different parameter regimes. We have to distinguish between three cases. In each case the choice of $\delta$ in \eqref{eq:defi:delta} ensures sufficient geometric decay to control the corresponding sum and \eqref{defi:lowl} implies $\lowl^{d-1}2^\lowl \asymp n$. First we consider the case $s>2$. From Proposition \ref{prop:quantsum}\ref{prop:quantsumiii} (if $n_\ell<3$ we use the trivial error bounds) and \eqref{eq:defnuldefnl} we conclude
		\begin{align*}
			\errquant_{\lfloor n_\ell \rfloor }( S_{\mcW_\ell} , \tilde F_\ell ) \lesssim n_\ell^{-1}\le \lowl^{-(d-1)}2^{-\lowl}2^{(\ell-\lowl)\delta}.
		\end{align*}
		By using this inequality in \eqref{eq:errIdfin} and taking \eqref{defi:lowl} into account we get
		\begin{align*}
			E(n) 
			&\lesssim \lowl^{-(d-1)}2^{-(r+1)\lowl}\sum_{\ell=\lowl+1}^{\highl} \ell^{(d-1)(1-1/q)} 2^{-(r-\delta)(\ell-\lowl)}\\
			&\lesssim 2^{-(r+1)\lowl} \lowl^{-(d-1)/q}\\
			&\lesssim (2^{\lowl}\lowl^{d-1})^{-(1+r)}  (\lowl+\log \lowl)^{(d-1)(r+1-1/q)}  \\
			&\lesssim n^{-r-1} (\log n)^{(d-1)(r+1-1/q)}.
		\end{align*}
		
		Next we prove the upper bound in the case $1\le s < 2$. From Proposition \ref{prop:quantsum}\ref{prop:quantsumi}, \eqref{eq:defnuldefnl} and \eqref{eq:defWl} we get
		\begin{align*}
			\errquant_{\lfloor n_\ell \rfloor }( S_{\mcW_\ell} ,\tilde F_\ell)
			&\lesssim \min\{n_\ell^{-2(1-1/s)},n_\ell^{-2/s}W_\ell^{2/s-1}\}\max\{\log(n_\ell/\sqrt {W_\ell}),1\}^{2/s-1}\\
			& \lesssim n_\ell^{-2/s}W_\ell^{2/s-1} (1+\max\{\log(n_\ell/\sqrt {W_\ell}),0\})^{2/s-1} \\
			& \lesssim n_\ell^{-2/s}W_\ell^{2/s-1} (1+ \lowl + \log \lowl)^{2/s-1} \\
			&\lesssim \lowl^{-2(d-1)/s}\ell^{(d-1)(2/s-1)} 2^{-\lowl} 2^{-(\ell-\lowl)(s-2-2\delta)/s}(\log\lowl+\lowl)^{2/s-1}.
		\end{align*}
		Together with \eqref{eq:errIdfin}, and \eqref{defi:lowl} this results in the following estimate
		\begin{align*}
			E(n)
			& \lesssim 2^{-(r+1)\lowl}\lowl^{-2(d-1)/s}(\log \lowl + \lowl)^{2/s-1} \\
			& \phantom{==} \cdot\!  \sum_{\ell=\lowl+1}^{\highl} 2^{-(\ell-\lowl)(s+rs-2-2\delta)/s} \ell^{(d-1)(2/s-1/q)}\\
			&\lesssim 2^{-(r+1)\lowl} \lowl^{-(d-1)/q}(\log \lowl + \lowl)^{2/s-1} \\
			&\lesssim (2^\lowl \lowl^{d-1})^{-(r+1)} (\lowl+\log \lowl)^{(d-1)(r+1-1/q)+2/s-1}\\
			&\lesssim n^{-r-1} (\log n)^{(d-1)(r+1-1/q)+2/s-1}\,.  
		\end{align*}
		
		Finally we consider the case $s=2$. An application of Proposition \ref{prop:quantsum}\ref{prop:quantsumii} together with \eqref{eq:defnuldefnl} and \eqref{eq:defWl} yields
		\begin{align*}
			\errquant_{\lfloor n_\ell \rfloor }( S_{\mcW_\ell} ,\tilde F_\ell)
			&\lesssim n_\ell^{-1} (\log(W_\ell/n_\ell)+\log\log(n_\ell+1)+2)^{3/2} \\
			&\phantom{==}\cdot \log(\log(W_\ell/n_\ell)+\log\log(n_\ell+1)+2)\\
			&\lesssim \lowl^{-(d-1)}2^{-\lowl}2^{(\ell-\lowl)\delta}(\ell-\lowl+\log(\ell/\lowl)+\log\log n+1)^{3/2}\\
			&\phantom{==}\cdot \log(\ell-\lowl+\log(\ell/\lowl)+\log\log n+1)\,.
		\end{align*}
		We plug this into \eqref{eq:errIdfin}, use again \eqref{defi:lowl} and get the upper bound of the last case:
		\begin{align*}
			E(n)
			&\lesssim 2^{-(1+r)\lowl}\lowl^{-(d-1)} \sum_{\ell=\lowl+1}^{\highl}2^{-(r-\delta)(\ell-\lowl)}\ell^{(d-1)(1-1/q)}\\
			& \phantom{=============} \cdot (\ell-\lowl+\log(\ell/\lowl)+\log\log n+1)^{3/2} \\
			& \phantom{=============} \cdot \log(\ell-\lowl+\log(\ell/\lowl)+\log\log n)\\
			&\lesssim 2^{-(1+r)\lowl} \lowl^{-(d-1)/q}(\log\log n)^{3/2} \log\log\log n\\
			&\lesssim (2^{\lowl}\lowl^{d-1})^{-1-r}(\lowl+\log\lowl)^{(d-1)(r+1-1/q)}(\log\log n)^{3/2} \log\log\log n\\
			&\lesssim n^{-1-r}(\log n)^{(d-1)(r+1-1/q)}(\log\log n)^{3/2} \log\log\log n.
		\end{align*}
		In summary, \eqref{mainerrorest}, \eqref{errortrunc}, \eqref{errorlowerlevels}, \eqref{eq:errIdfin}, and the estimates of $E(n)$ in the three different cases yield the claimed rates.
	\end{proof}
	
	\section{Lower bounds}
	In this section we want to prove the lower bound of the integration problem. Our proof uses an embedding result for periodic Besov spaces based on atomic decompositions from \cite[Proposition 5.5.10.]{dung2018} which is based on \cite[Thm. 2.4]{vybiral2006}.
	\begin{prop}\label{prop:normatomdecomb}
		Let $1\le p,q\le \infty$, $r>0$, $\varphi \in C^\infty(\bbR)$ with $\supp(\varphi) \subset (0,1)$ and
		\begin{align*}
			a_{\bj,\bk}(x)
			\coloneq \prod_{i=1}^{d}\varphi(2^{j_i}x_i-k_i),    \quad \bj\in \bbN_{0}^d, \bk \in \bbB_\bj, x \in \bbR^d.
		\end{align*}
		Further let $(\lambda_{\bj,\bk})_{\bj\in \bbN_{0}^d, \bk \in \bbB_\bj} \in s_{p,q}^rb$. Then 
		\begin{align*}
			f = \sum_{\bj\in \bbN_{0}^d }\sum_{\bk \in \bbB_\bj} \lambda_{\bj,\bk}a_{\bj,\bk}
		\end{align*}
		belongs to $\BS$ and 
		\begin{align*}
			\| f \|_{\BS} 
			&\lesssim \| \lambda \|_{s_{p,q}^rb},
		\end{align*}
		where the constant depends on $r,p,q$ and $\varphi$.
	\end{prop}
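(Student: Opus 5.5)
The plan is to prove Proposition \ref{prop:normatomdecomb} by estimating the Fourier building blocks $f_\bj^\varphi$ of $f$ directly. The aim is to show that each $\|f_\bj^\varphi\|_{L_p}$ is controlled by a geometrically weighted sum of the local quantities $2^{-|\bm\mu|_1/p}\|\lambda_{\bm\mu,\cdot}\|_{\ell_p}$, where $\bm\mu$ runs over the atom levels. Throughout I write $\bm\mu\in\bbN_0^d$ for the atom level, to keep it separate from the building-block index $\bj$. After that, a discrete convolution (Young/Hardy) inequality in $\bj$ finishes the proof. Everything factorizes over coordinates because both the atoms and the decomposition of unity are tensor products. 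So the core is univariate, and the multivariate statement follows by iterating in each coordinate.

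\textbf{Step 1: single-level estimate.} Fix $\bm\mu$ and set $g_{\bm\mu}=\sum_{\bk\in\bbB_{\bm\mu}}\lambda_{\bm\mu,\bk}a_{\bm\mu,\bk}$. For each fixed $\bm\mu$ this is a finite sum. Periodize the atoms; since $\supp\varphi\subset(0,1)$, the supports of $a_{\bm\mu,\bk}$ for different $\bk$ are disjoint. This gives $\|g_{\bm\mu}\|_{L_p}\asymp 2^{-|\bm\mu|_1/p}\|\lambda_{\bm\mu,\cdot}\|_{\ell_p}$, and the same bound holds for every mixed derivative, each derivative $\partial_i^{M}$ costing a factor $2^{\mu_iM}$.

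\textbf{Step 2: interaction between building block $\bj$ and level $\bm\mu$.} Coordinatewise I distinguish two cases.
\begin{itemize}
\item If $j_i\le\mu_i$, use that the atoms have vanishing moments in the weak sense: $\varphi$ is a smooth compactly supported function, but in general $\int\varphi\ne0$. So here I instead rely on the trivial bound, namely the uniform $L_p$-boundedness of the multiplier $\varphi_{j_i}(D)$, which gives factor $1$.
\item If $j_i>\mu_i$, integrate by parts $M$ times. This means writing $\varphi_{j_i}(D)=2^{-j_iM}\psi_{j_i}(D)\partial_i^M$ with uniformly bounded multipliers $\psi_{j_i}(D)$, which yields the factor $2^{-(j_i-\mu_i)M}$.
\end{itemize}
Combining the two cases,
\[
\|(g_{\bm\mu})_\bj^\varphi\|_{L_p}\lesssim\prod_{i=1}^d 2^{-(j_i-\mu_i)_+M}\,2^{-|\bm\mu|_1/p}\|\lambda_{\bm\mu,\cdot}\|_{\ell_p}.
\]

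\textbf{Step 3: summation over levels, the main obstacle.} For $j_i<\mu_i$ Step 2 gives no decay at all. With no decay in the direction $j_i<\mu_i$, the sum over $\mu_i\ge j_i$ of $2^{-\mu_i(r-1/p)}\times2^{\mu_i r}(\ldots)$ would diverge for general sequences. The weight $2^{j_ir}$ must be absorbed using $2^{j_ir}\le 2^{\mu_ir}2^{-(\mu_i-j_i)r}$, and this is exactly where $r>0$ enters. Applying the factor $2^{-(\mu_i-j_i)r}$ for $\mu_i>j_i$ and $2^{-(j_i-\mu_i)(M-r)}$ for $\mu_i<j_i$, with $M>r$, gives
\[
2^{r|\bj|_1}\|f_\bj^\varphi\|_{L_p}\lesssim\sum_{\bm\mu}\prod_i 2^{-|j_i-\mu_i|\varepsilon}\,b_{\bm\mu},\qquad b_{\bm\mu}=2^{|\bm\mu|_1(r-1/p)}\|\lambda_{\bm\mu,\cdot}\|_{\ell_p},
\]
with $\varepsilon=\min(r,M-r)>0$. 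The kernel is summable on $\bbZ^d$, so Young's inequality in $\ell_q(\bbN_0^d)$ yields $\|f\|_{\BS}\lesssim\|b\|_{\ell_q}=\|\lambda\|_{s^r_{p,q}b}$.

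The step I expect to be delicate is justifying the no-loss bound for $j_i\le\mu_i$ with the factor $2^{-(\mu_i-j_i)r}$. For $p=1$ or $p=\infty$ the Fourier multiplier bounds still hold, because the $\varphi_j$ have uniformly integrable kernels. Convergence of the series defining $f$ in $L_1$, and the identification of the building blocks of the limit, follow from absolute convergence of these estimates. Since this is exactly \cite[Prop.~5.5.10]{dung2018}, one may alternatively simply invoke it.
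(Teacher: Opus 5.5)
Your proof is correct, but it takes a different route from the paper. The paper does not prove Proposition~\ref{prop:normatomdecomb} itself; it imports it from the atomic-decomposition theory for mixed Besov spaces \cite[Prop.~5.5.10]{dung2018}, \cite[Thm.~2.4]{vybiral2006}. That theory covers the full quasi-Banach range $0<p\le\infty$ and more general atoms of finite smoothness, with moment conditions in the small-smoothness regime. Its proof therefore cannot rely on $L_p$-boundedness of Fourier multipliers and must work with pointwise kernel and maximal-function estimates.

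You instead exploit $p\ge 1$:
\begin{itemize}
\item the periodized kernels of $\varphi_j(D)$ and of your $\psi_j(D)$ are uniformly bounded in $L_1$, which follows from condition (ii) in the definition of $\Phi(\bbR)$;
\item the atoms of a fixed level $\bm\mu$ have disjoint supports, so $\|\partial^{\bm\alpha}g_{\bm\mu}\|_{L_p}$ is computed exactly;
\item the high-frequency directions gain $2^{-(j_i-\mu_i)M}$, while the low-frequency directions are paid for by the weight alone via $r>0$;
\item a discrete Young inequality then closes the estimate.
\end{itemize}
Your argument is shorter and self-contained. It also shows transparently why no moment conditions are needed for $p\ge 1$, $r>0$, consistent with the choice $\int\varphi=1$ in the lower-bound construction. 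The citation buys generality that is not needed here.

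Two small clean-ups:
\begin{itemize}
\item The step you call delicate is not. The factor $2^{-(\mu_i-j_i)r}$ is just a rewriting of the weight and needs only the trivial multiplier bound. The remark about ``vanishing moments in the weak sense'' should be deleted, since no moments are used.
\item For convergence of the series in $L_1$, state explicitly that $\sum_{\bm\mu}\|g_{\bm\mu}\|_{L_p}\lesssim\|b\|_{\ell_\infty}\sum_{\bm\mu}2^{-r|\bm\mu|_1}<\infty$. This again uses $r>0$, and it is what justifies passing the multipliers through the infinite sum.
\end{itemize}
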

	
	\subsection{Construction of test functions}
	In the classical setting, it is common to construct test functions for establishing lower bounds. A similar approach is adopted in the quantum context:  We formulate a sufficiently hard summation problem and, using Lemma \ref{lem:quantemb}, reduce it to the integration problem in $\BS$. The image of the reduction mapping $\Gamma$ serves as our test function class. The construction of $\Gamma$ draws on classical constructions of test functions in spaces with dominating mixed smoothness. A precursor to such a construction can be found in Triebel \cite[Proof of Thrm. 4.15, step 3]{triebel2010}; later generalizations were provided by D\~ung and T. Ullrich \cite[Sect. 4]{du2015} and M. and T. Ullrich \cite[Sect. 7.2]{uu2016}. To ensure that $\Gamma$ has the form given in \eqref{querrymapping}, we decompose the integration domain into a sufficiently large number of disjoint rectangles, analogous to Heinrich’s approach with congruent cubes in the Sobolev case \cite[Proof of Thrm. 1]{heinrich2003sobolev}. In the process we have to restrict the set of indices in the hyperbolic layer so that this selection can be assigned injectively to these rectangles.
	
	We give two different index sets depending on $p$ and $q$. We begin our investigation with the case $q > p$ and $d>1$: For $\ell \in \bbN$, 
	\begin{align}
		\ell\ge d\log(2d(d-1)),
		\label{l_sufficientlarge}
	\end{align}
	we set
	\begin{align}\label{defi:u}
		u\coloneq \mleft\lceil \frac{d-1}{d}\log \ell \mright\rceil.
	\end{align}    
	Then we divide the integration domain
	\begin{align*}
		[0,1)^{d} = \bigcup_{ \bh\in \bbB_u^d} D_{\bh}
	\end{align*}
	with $D_{\bh}\coloneq\prod_{i=1}^d[2^{-u}h_i,2^{-u}(h_i+1))$ into $2^{du}$ disjoint congruent cubes and set
	\begin{align*}
		\mcV_\ell 
		&\coloneq \mleft\{ \bj \in \bbN^d: \sum_{i=1}^d j_i= \ell \text{ and } j_i\ge u \text{ for } i =1,\dots, d \mright\}.
	\end{align*}
	With \eqref{l_sufficientlarge} we ensure $\ell\ge du$ such that $\mcV_\ell$ is not empty. Since
	\begin{align}
		| \mcV_\ell|
		&=\mleft| \mleft\{ \bj \in \bbN^d: \sum_{i=1}^d j_i= \ell-du+d \mright\}\mright|
		=\binom{\ell-du+d-1}{d-1}\notag \\
		&\le \ell^{d-1}
		= 2^{(d-1)\log \ell}\le 2^{du},
		\label{eq:anzV_l}
	\end{align}
	there is an injective mapping
	\begin{align*}
		h: \mcV_\ell \to \bbB_u^d, \bj \mapsto h(\bj).
	\end{align*}
	
	Let 
	\begin{align*}
		R_{\bj,\bk} \coloneq\prod_{i=1}^d \mleft[2^{-j_i}k_i,2^{-j_i}(k_i+1)\mright), \quad \bj \in \bbN_0^d, \bk \in \bbB_\bj
	\end{align*}
	be a half open dyadic rectangle. We restrict the set of admissible translation indices to
	\begin{align}
		\mcK_\bj \coloneq \{ \bk \in \bbB_\bj: R_{\bj,\bk} \subseteq D_{h(\bj)} \} \quad \text{and set} \quad \mcW_\ell \coloneq\{ (\bj,\bk): \bj \in \mcV_\ell, \bk \in \mcK_\bj \}.
		\label{eq:defK_jW_l}
	\end{align}
	An example is shown in Figure \ref{fig:rect}. For $(\bj,\bk),(\tilde \bj, \tilde \bk) \in \mcW_\ell, (\bj,\bk) \neq (\tilde \bj, \tilde \bk)$ we get
	\begin{align*}
		R_{\bj,\bk}\cap R_{\tilde \bj,\tilde \bk} = \emptyset,
	\end{align*} 
	and for all $\bj \in \mcV_\ell$ we have
	\begin{align*}
		D_{h(\bj)}
		=\bigcup_{\bk \in \mcK_\bj} R_{\bj,\bk}.
	\end{align*}

	\begin{figure}
		\centering
		\begin{tikzpicture}[scale=2.5]
			\achsen
			\draw (0,0) rectangle (1,1);
			\draw (1/2,0) -- (1/2,1);
			\draw (0,1/2) -- (1,1/2);
		\end{tikzpicture}
		\begin{tikzpicture}[scale=2.5]
			\node[below left] at (0,0) {$\phantom{0}$};
			\node[below] at (1,-0.015) {$\phantom{1}$};
			\node[left] at (-0.015,1) {$\phantom{1}$};
			
			\draw (2/8,1/8) rectangle (6/8,2/8);
			\node[below right,font=\small,inner sep=2pt] at (2/8,1/8) {$R_{(1,3),\bk}$};
			
			\draw (4/8,3/8) rectangle (6/8,5/8);
			\node[above right,font=\small,inner sep=2pt] at (4/8,5/8) {$R_{(2,2),\bk}$};
			
			\draw (2/8,3/8) rectangle (3/8,7/8);
			\node[above right,font=\small,inner sep=2pt] at (2/8,7/8) {$R_{(3,1),\bk}$};
			
		\end{tikzpicture}
		\begin{tikzpicture}[scale=2.5]
			\achsen
			\draw (0,0) rectangle (1,1);
			\draw (1/2,0) -- (1/2,1);
			\draw (0,1/2) -- (1,1/2);
			
			\draw[dashed] (0,1/8) -- (1/2,1/8);
			\draw[dashed] (0,1/4) -- (1/2,1/4);
			\draw[dashed] (0,3/8) -- (1/2,3/8);
			
			\draw[dashed] (1/2,1/4) -- (1,0.25);
			\draw[dashed] (3/4,0) -- (3/4,0.5);
			
			\draw[dashed] (1/8,1/2) -- (1/8,1);
			\draw[dashed] (1/4,1/2) -- (1/4,1);
			\draw[dashed] (3/8,1/2) -- (3/8,1);
		\end{tikzpicture}
		\caption{Illustration for $q\ge p, d=2, \ell=4$: $u=1$, so we divide the domain into $2^{ud}=4$ squares. $\mcV_4=\{ (1,3),(2,2),(3,1) \}$, i.\,e. we have 3 types of rectangles. For each type, we choose exactly one of the sub-squares and consider all translations $\bk$ for which the corresponding rectangles are contained within the selected sub-square.}
		\label{fig:rect}
	\end{figure}
	
	The simple volume argument $\vol(D_{\bh(\bj)})=\sum_{\bk \in \mcK_\bj} \vol(R_{\bj,\bk})=|\mcK_\bj|2^{-\ell}$ reveals together with \eqref{defi:u}
	\begin{align}
		K_\ell 
		\coloneq |\mcK_\bj|=2^{\ell-du}
		\le 2^\ell \ell^{-(d-1)}.
		\label{eq:anzK_l}
	\end{align}
	The restricted index set $\mcK_\bj$ contains at most a fraction $\ell^{-(d-1)}$ of the indices in $\bbB_\bj$. To estimate the number of indices $\mcW_\ell$ we have to consider the set $\mcV_\ell$ again. With \eqref{defi:u} we get
	\begin{align*}
		du-d+1
		\le d\log \ell,
	\end{align*}	
	and thus an asymptotic lower bound for $\mcV_\ell$ by
	\begin{align*}
		\binom{\ell-du+d-1}{d-1}
		\gtrsim \ell^{d-1}.
	\end{align*}
	
	Together with \eqref{eq:anzV_l}, \eqref{eq:defK_jW_l} and \eqref{eq:anzK_l} there is a constant $c_1>0$
	\begin{align}\label{anzwl}
		c_1 2^\ell \le |\mcW_\ell|=\sum_{\bj \in \mcV_\ell} |\mathcal K_\bj|= |\mcV_\ell|K_\ell \le 2^\ell.
	\end{align}
	Now for $F \coloneq \{f \in L_\infty^{\mcW_\ell} : \|f\|_{L_\infty^{\mcW_\ell}}\le 1 \}$ we set
	\begin{align}\label{eq:gammaconst}
		\Gamma: F \to \BS, f\mapsto \sum_{{\bj,\bk}\in \mcW_\ell} \gamma(\beta(f_{\bj,\bk})) a_{\bj,\bk}
	\end{align}
	with $a_{\bj,\bk}$ defined in Proposition \ref{prop:normatomdecomb}, under the additional condition $\int \varphi(x) \diffd x =1$, and $\gamma$, $\beta$ defined in \eqref{eq:def_beta_gamma} for $m^*\in \bbN, m^*\ge2$.
	
	We proceed to the case $q\le p$ or $d=1$, where the construction is more straightforward. Let $\bj \in \bbN_0^d$ such that $|\bj|_1=\ell$. It suffices to adjust the index sets $\mcV_\ell=\{\bj\}$ and $\mcK_\bj=\bbB_{\bj}$, retaining all other notations. Observe that $|\mcW_\ell|=2^\ell$ in this instance.
	
	\subsection{Proof of Theorem \ref{thm:besovintlow}}
	\begin{proof}
		Let $\ell \in \bbN$. First we estimate $\| \Gamma f\|$ for $f \in F$ and $\Gamma$ defined in \eqref{eq:gammaconst}. By using  \eqref{distretizationerror} for $|z| \le 1, m^*\in \bbN, m^*\ge2$ we get
		\begin{align*}
			|\gamma(\beta(z))| \le |z|+|z-\gamma(\beta(z))|\le 1+2^{-m^*/2}\le 2,
		\end{align*}
		and with Proposition \ref{prop:normatomdecomb} for $f \in F$ it follows 
		\begin{align}
			\|\Gamma f\|_\BS 
			&\lesssim \mleft( \sum_{\bj \in \mcV_\ell }2^{|\bj|_1(r-1/p)q}\mleft( \sum_{\bk \in \mcK_{\bj}} |\gamma(\beta(f_{\bj,\bk}))|^p  \mright)^{q/p} \mright)^{1/q} \notag \\
			&\lesssim \mleft( \sum_{\bj \in \mcV_\ell } 2^{\ell(r-1/p)q}K_\ell^{q/p}   \mright)^{1/q}\notag\\
			&\lesssim 2^{\ell(r-1/p)}K_\ell^{1/p} |\mcV_\ell|^{1/q} \notag \\
			&\lesssim 2^{\ell r}\ell^{-(d-1)(1/p-1/q)_+}.
			\label{eq:estgamma}
		\end{align}
		So there is a constant $\tilde c>0$ such that $\Gamma (F) \subset \tilde c 2^{\ell r}\ell^{-(d-1)(1/p-1/q)_+}\sB(\BS)$. 
		We have 
		\begin{align*}
			I_d \Gamma f
			&= \sum_{(\bj,\bk) \in \mcW_\ell} \gamma ( \beta ( f_{\bj,\bk} )) \int_{\bbT^d} a_{\bj,\bk}(x)\diffd x 
			= \frac{1}{2^{\ell}} \sum_{(\bj,\bk) \in \mcW_\ell}  \gamma ( \beta ( f_{\bj,\bk})) 
			=\frac{|\mcW_\ell|}{2^\ell} S_{\mcW_\ell} (\gamma \circ \beta \circ f),
		\end{align*}
		where $|\mcW_\ell|/2^\ell\asymp 1$, see \eqref{anzwl}. The construction of $\Gamma$ allows us to define the required mappings $\eta$ and $\rho$ from Lemma \ref{lem:quantemb}. For some fixed $(\bj_0,\bk_0) \in \mcW_\ell$ let
		\begin{align*}
			\eta : \bbT^d \to \mcW_\ell, x \mapsto \begin{cases}
				(\bj_0,\bk_0) &\textrm{if } x \notin \bigcup_{(\bj,\bk) \in \mcW_\ell} R_{\bj,\bk},\\
				(\bj,\bk) &\textrm{if } x \in R_{\bj,\bk},
			\end{cases}
		\end{align*}
		and
		\begin{align*}
			\rho: \bbT^d \times \bbB_{m^*} \to \bbR, (x,z) \mapsto \gamma(z)a_{\eta(x)}(x).
		\end{align*}
		The supports of $a_{\bj,\bk}$ corresponding to different $(\bj,\bk)$ are disjoint. Therefore, for every $x$, at most one term in the following expression is nonzero and the sum simplifies to a single term
		\begin{align*}
			\Gamma(f)(x)
			&=\sum_{(\bj,\bk) \in \mcW_\ell} \gamma ( \beta ( f_{\bj,\bk})) a_{\bj,\bk}(x) 
			=  \gamma ( \beta ( f_{\eta(x)})) a_{\eta(x)}(x) 
			= \rho(x,\beta \circ f \circ \eta(x)).
		\end{align*}
		Consequently, $\Gamma$ is of the form \eqref{querrymapping}, allowing the application of Lemma \ref{lem:quantemb} with $\kappa = 1$ in the next step.
		
		For $n \in \bbN, n \ge 2$, define $\ell \coloneq \lceil \log(n/(cc_1))+1 \rceil$, where $c$ denotes the constant in Proposition \ref{prop:quantsum} and $c_1$ is the constant in \eqref{anzwl} if $q>p, d>1$, otherwise $c_1=1$. Without loss of generality, we choose $n$ sufficiently large to satisfy condition \eqref{l_sufficientlarge} for $\ell$ if $q>p, d>1$.
		
		We get $cc_1 2^{\ell-1} \le 2n \le c|\mcW_\ell|,$
		and with \eqref{eq:estgamma} and Proposition \ref{prop:quantsum} \ref{prop:quantsumiii} we have
		\begin{align*}
			\frac{1}{n} 
			&\lesssim \errquant_{2  n}(S_{\mcW_\ell}, F)
			\lesssim \errquant_{2  n}(S_{\mcW_\ell}\circ \gamma\circ\beta, F)+2^{-m^*/2}
			\lesssim \errquant_{2  n}(I_d\circ \Gamma, F)+2^{-m^*/2} \\
			&\lesssim 2^{\ell r}\ell^{-(d-1)(1/p-1/q)_+} \errquant_{n}(I_d, \sB(\BS))+2^{-m^*/2}\\
			&\lesssim n^r\log(n)^{-(d-1)(1/p-1/q)_+} \errquant_{n}(I_d, \sB(\BS))+2^{-m^*/2}.\\
		\end{align*}
		Since $m^*$ may be chosen arbitrarily large, we obtain the desired result.
	\end{proof}
	
	\section{Concluding remarks}
	Theorems \ref{thm:besovintup}, \ref{thm:besovintup_2}, and \ref{thm:besovintlow} show that for $1\le p,q \le \infty$, $\max\{1/p,2/q-1\}<r< 2$ and for $p=q=r=2$ the proposed quantum algorithm has the main decay rate $n^{-r-1}$ which is optimal up to logarithmic terms. Due to proof technique limitations, we only examined smoothness classes up to order 2 in our approach. The question remains how far the methods and results can be extended to higher degrees of smoothness. 
	
	Further, it remains open whether the logarithmic factors are intrinsic to the problem or an artifact of our analysis. At least the parameter $r$ in the $\log$-power for the upper bound looks somewhat artificial and can probably be dropped. We leave this for future contributions. It might require a different technique. This additional $\log$-term is one of the reasons why,  
	in contrast to isotropic Sobolev spaces with $p>2$ \cite{heinrich2003sobolev}, we were not able to observe the full $\mathcal{O}(1/n)$-gain between deterministic and quantum algorithms. The other reason is the appearance of the $\lambda_{p,q}(n)$-factor in case $\min\{p,q\}\le2$. Note that the deterministic rate of convergence is well-known
	\begin{align*}
		\errdet_n(I_d,\sB(\BS))\asymp n^{-r}(\log n)^{(d-1)(1-1/q)}\,,
	\end{align*}
	\cite[Thm. 8.5.1]{dung2018}. We believe that this full gain is present at least for $H^r_{\text{mix}}(\bbT^d) = S_{2,2}^rB(\bbT^d)$. It may be even possible to achieve a gain of more than $1/n$ compared to the deterministic case. More precisely, for $d>1$, does 
	\begin{align*}
		\errquant_n(I_d,\sB(H^r_{\text{mix}}(\bbT^d)))/\errdet_n(I_d,\sB(H^r_{\text{mix}}(\bbT^d))) = o(n^{-1})\
	\end{align*}
	hold true? We leave this as another open question. 
	
	A related effect has been observed by M. Ullrich \cite{ul17} for the randomized setting compared to the deterministic setting. In fact, in the randomized setting, sharp error estimates are known in the mixed Sobolev case $p=q=2$, namely
	\begin{align*}
		\errrand_n(I_d,\sB(H^r_{\text{mix}}(\bbT^d)))\asymp n^{-r-1/2}\,.
	\end{align*}
	(This result is also mentioned in the book by D\~ung, V. Temlyakov and T. Ullrich \cite[page 140]{dung2018}, however with a small typo in the right-hand side which occurs in case  $p>2$ for corresponding mixed Sobolev spaces.) Interestingly, there is no $\log$-term in this result and hence 
	\begin{align*}
		\errrand_n(I_d,\sB(H^r_{\text{mix}}(\bbT^d)))/\errdet_n(I_d,\sB(H^r_{\text{mix}}(\bbT^d))) \asymp n^{-1/2}(\log n)^{-(d-1)/2} = o(n^{-1/2})\,,
	\end{align*}
	for $d>1$. Due to the $\log$-term this is a purely multivariate effect (i.e., not present for $d=1$). 
	
	Furthermore, in contrast to randomized algorithms, the quantum error decay gain is roughly of order one for the entire range of the parameter $1\leq p\leq \infty$.   Randomized algorithms show the usual effect that the gain shrinks as $1-1/p$ as $p\to 1$. However, from the point of view of numerical integration the space $S^{2}_{1,\infty}B(\mathbb{T}^d)$ is particularly important since it is a smoothness space built upon $L_1$. This space contains tensor products of piecewise linear functions. Hence, in this regime we may gain {\em one} convergence order compared to the deterministic {\em and} randomized setting. In fact, we already observed for $p=1$ and $r<2$  a convergence order close to {\em three}. In any case, quantum might outperform random algorithms by more than order $1/2$, i.e., for $1<r<2$
	\begin{align*}
		\errquant_n(I_d,\sB(S_{1, \infty}^rB(\bbT^d)))/\errrand_n(I_d,\sB(S_{1,\infty}^rB(\bbT^d))) = o(n^{-1+\delta})
	\end{align*}
	for $\delta >0$. 
	Here we used that for $r>1$ 
	\begin{align*}
		\errrand_n(I_d,\sB(S_{1,\infty}^rB(\bbT^d))) 
		\asymp n^{-r}(\log n)^{(d-1)/2}\,,
	\end{align*}
	which will be proven in a forthcoming paper. A similar effect  has been already observed by Heinrich \cite[Page 12]{heinrich2001optimal}, \cite[Table 1]{heinrich2003sobolev} for isotropic Sobolev classes. Interestingly, our quantum lower bound shows that the full gain $1/n$ in this particular situation between quantum and random is not possible when $d>1$. In Heinrich's setting this might still be possible. However, the gain may be possible when comparing the quantum and deterministic setting.  
	
	The final point we want to make is that we only focused on query complexity in the present paper. The number of qubits, quantum gates, as well as the cost of classical post-processing is not included in our analysis. 
	
	\paragraph{Acknowledgment.} BK and TU acknowledge the support of the German Research Foundation (DFG 403/4-1). They also express their gratitude to Stefan Heinrich for extensive discussions, including those following his seminar presentation at TU Chemnitz in 2023 and those at MCQMC 2024 in Waterloo, Canada, and MCM 2025 in Chicago, USA, as well as for his insightful comments, observations, and supplementary references.
	{\small
		\printbibliography
	}
\end{document}